\newtheorem{thm}{Theorem}[section]
\newtheorem{lem}[thm]{Lemma}
\newtheorem{cor}[thm]{Corollary}
\newtheorem{prop}[thm]{Proposition}
\theoremstyle{definition}
\newtheorem{defn}[thm]{Definition}
\theoremstyle{remark}
\newtheorem{exam}[thm]{Example}
\numberwithin{equation}{section}
\newcommand{\ord}{\text{ord}}
\newcommand{\z}{{\mathbb Z}}
\newcommand{\q}{{\mathbb Q}}
\newcommand{\gen}{\text{gen}}
\newcommand{\p}{{\mathfrak p}}
\newcommand{\ring}{\mathfrak o}
\newcommand{\ba}{\mathbf a}
\newcommand{\be}{\mathbf e}
\newcommand{\bx}{\mathbf x}
\newcommand{\bu}{\mathbf u}
\newcommand{\bv}{\mathbf v}
\newcommand{\bw}{\mathbf w}
\newcommand{\bz}{\mathbf z}
\newcommand{\ad}{{\mathbb A}}
\begin{document}

\title[Integral quadratic polynomials]{Representations of integral quadratic polynomials}

\author{Wai Kiu Chan}
\address{Department of Mathematics and Computer Science, Wesleyan University, Middletown CT, 06459, USA}
\email{wkchan@wesleyan.edu}

\author{Byeong-Kweon Oh}
\address{Department of Mathematical Sciences and Research Institute of Mathematics, 
Seoul National University,
 Seoul 151-747, Korea}
\email{bkoh@snu.ac.kr}
\thanks{This work of the second author was supported by the National Research Foundation of Korea(NRF) grant funded by the Korea government(MEST) (No. 20110027952)} 

\subjclass[2010]{Primary 11D09, 11E12, 11E20}

\keywords{Integral quadratic polynomials}

\begin{abstract}
In this paper, we study the representations of integral quadratic polynomials.  Particularly, it is shown that there are only finitely many equivalence classes of positive ternary universal integral quadratic polynomials, and that there are only finitely many regular ternary triangular forms.  A more general discussion of integral quadratic polynomials over a Dedekind domain inside a global field is also given.
\end{abstract}

\maketitle

\section{Introduction}

For a polynomial $f(x_1, \ldots, x_n)$ with rational coefficients and an integer $a$, we say that $f$ represents $a$ if the diophantine equation
\begin{equation} \label{1steqn}
f(x_1, \ldots, x_n) = a
\end{equation}
is soluble in the integers.  The {\em representation problem} asks for a complete determination of the set of integers represented by a given polynomial.   This problem is considered to be untractable in general in view of Matiyasevich's negative answer to Hilbert's tenth problem \cite{ma}.  Moreover, Jones \cite{j} has shown that whether a general single diophantine equation of degree four or higher is soluble in the positive integers is already undecidable.  However, the linear and the quadratic cases have been studied extensively.  The linear case is elementary and its solution is a consequence of the Euclidean algorithm.  For the quadratic case, the representation problem for homogeneous quadratic polynomials, or quadratic forms in other words, has a long history and it still garners a lot of attention from mathematicians across many areas.  For accounts of more recent development of the subject, the readers are referred to the surveys \cite{h, sp} and the references therein.  In this paper, we will discuss a couple of questions which are related to the representation problem of quadratic polynomials in general, namely {\em universality} and {\em regularity}, which we will explain below.

A quadratic polynomial $f(\bx) = f(x_1, \ldots, x_n)$ can be written as
$$f(\bx) = Q(\bx) + L(\bx) + c$$
where $Q(\bx)$ is a quadratic form, $L(\bx)$ is a linear form, and $c$ is a constant.  Unless stated otherwise {\em we assume that $Q$ is positive definite}.   This in particular implies that there exists a unique vector $\bv \in \q^n$ such that $L(\bx) = 2B(\bv, \bx)$, where $B$ is the bilinear form such that $B(\bx, \bx) = Q(\bx)$.  As a result,
$$f(\bx) = Q(\bx + \bv) - Q(\bv) + c \geq -Q(\bv) + c,$$
and so $f(\bx)$ attains an absolute minimum on $\z^n$.  We denote this minimum by $m_f$ and will simply call it the minimum of $f(\bx)$.  We call $f(\bx)$ {\em positive} if $m_f \geq 0$.

In this paper, we call a quadratic polynomial $f(\bx)$ {\em integral} if it is integer-valued, that is, $f(\bx) \in \z$ for all $\bx \in \z^n$.   A positive integral quadratic polynomial $f(\bx)$ is called {\em universal} if it represents all nonnegative integers.  Positive definite universal integral quadratic forms have been studied for many years by many authors and have become a popular topic in the recent years.  It is known that positive definite universal integral quadratic forms must have at least four variables, and there are only finitely many equivalence classes of such universal quadratic forms in four variables.  Moreover, a positive definite integral quadratic form is universal if and only if it represents all positive integers up to 290 \cite{bh}.   However, Bosma and Kane \cite{bk} show that this kind of finiteness theorem does not exist for positive integral quadratic polynomials in general.  More precisely, given any finite subset $T$ of $\mathbb N$ and a positive integer $n \not \in T$, Bosma and Kane construct explicitly a positive integral quadratic polynomial with minimum 0 which represents every integer in $T$ but not $n$.

An integral quadratic polynomial is called {\em almost universal} if it represents all but finitely many positive integers.  A classical theorem of Tartakovski \cite{t} implies that a positive definite integral quadratic form in five or more variables is almost universal provided it is universal over $\z_p$ for every prime $p$.  An effective procedure for deciding whether a positive definite integral quadratic form in four variables is almost universal is given in \cite{bo}.

Unlike positive definite universal or almost universal quadratic forms, positive universal and almost universal integral quadratic polynomials do exist in three variables.  One well-known example of universal quadratic polynomial is the sum of three triangular numbers
$$\frac{x_1(x_1+1)}{2} + \frac{x_2(x_2+1)}{2} + \frac{x_3(x_3 + 1)}{2}.$$
Given positive integers $a_1, \ldots, a_n$, we follow the terminology used in \cite{co} and call the polynomial
$$\Delta(a_1, \ldots, a_n): = a_1\frac{x_1(x_1 + 1)}{2} + \cdots + a_n\frac{x_n(x_n + 1)}{2}$$
a triangular form.   There are only seven universal ternary triangular forms and they were found by Liouville in 1863 \cite{li}.  Bosma and Kane \cite{bk} have a simple criterion--the Triangular Theorem of Eight--to determine the universality of a triangular form: a triangular form is universal if and only if it represents the integers 1, 2, 4, 5, and 8.  In \cite{co}, the present authors give a complete characterization of triples of positive integers $a_1, a_2, a_3$ for which $\Delta(a_1, a_2, a_3)$ are almost universal.   Particularly, it is shown there that there are infinitely many almost universal ternary triangular forms.  Almost universal integral quadratic polynomials in three variables that are mixed sums of squares and triangular numbers are determined in \cite{ch} and \cite{ks}.

Two quadratic polynomials $f(\bx)$ and $g(\bx)$ are said to be {\em equivalent} if there exists $T \in \text{GL}_n(\z)$ and $\bx_0 \in \z^n$ such that
\begin{eqnarray} \label{equiv}
g(\bx) = f(\bx T + \bx_0).
\end{eqnarray}
One can check readily that this defines an equivalence relation on the set of quadratic polynomials, and equivalent quadratic polynomials represent the same set of integers. In Section \ref{universal}, we will prove the following finiteness result on almost universal integral quadratic polynomials in three variables.  It, in particular, implies that given a nonnegative integer $k$, there are only finitely many almost universal ternary triangular forms that represent all integers $\geq k$.

\begin{thm} \label{thmin3}
Let $k$ be a nonnegative integer.  There are only finitely many equivalence classes of positive integral quadratic polynomials in three variables that represent all integers $\geq k$.
\end{thm}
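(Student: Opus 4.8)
The plan is to attach to each polynomial a finite packet of invariants and to bound each one in terms of $k$. Writing $f(\bx)=Q(\bx+\bv)-Q(\bv)+c$ with $Q$ positive definite, the relation \eqref{equiv} acts by replacing $Q$ with a $\z$-equivalent form, $\bv$ by another representative of its coset modulo $\z^3$ (composed with the action of $T$), and $c$ by $f(\bx_0)$; the real infimum $\mu:=c-Q(\bv)=\inf_{\real^3}f$ and the minimum $m_f$ are invariants. Since $f$ is integer valued we have $c=f(\0)\in\z$, and the linear form $\bx\mapsto 2B(\bv,\bx)$ is $\tfrac12\z$-valued, so $\bv$ lies in a fixed lattice (a bounded multiple of the dual of $(\z^3,Q)$) modulo $\z^3$. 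Consequently, in each class I may select a representative with $Q$ Minkowski reduced and $\bv$ reduced modulo $\z^3$, and $f$ is then determined by $Q$, the coset of $\bv$, and $c\in\z$. Thus it suffices to prove three boundedness statements: (a) $\disc Q$ is bounded in terms of $k$; and then, \emph{given} a bound on $\disc Q$, (b) the coset of $\bv$ ranges over a finite set, and (c) $c$ ranges over a finite set of integers.

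Statements (b) and (c) are routine once (a) holds. By the reduction theory of positive definite ternary forms there are only finitely many $\z$-classes with bounded discriminant, so I may fix $Q$. The integrality of $f$ confines $\bv$ to $\tfrac14 L^{\#}$ modulo $L$ (where $L=(\z^3,Q)$ and $L^\#$ is its dual), a finite group of order $O(\disc Q)$, giving (b). Finally $f$ represents $k$ and $m_f\ge 0$, so $0\le m_f\le k$ and $\mu=m_f-\min_{\bx\in\z^3}Q(\bx+\bv)\ge -R^2$, where $R$ is the covering radius of $L$; since $\disc Q$ is bounded, $R^2$ is bounded, so $\mu\in[-R^2,k]$ is bounded and hence $c=\mu+Q(\bv)$ lies in a bounded set of integers, giving (c).

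The heart of the matter is (a), which I would attack by a slicing argument. Fix a Minkowski-reduced basis $\be_1,\be_2,\be_3$ with $Q(\be_i)=a_i$, $a_1\le a_2\le a_3$, so that $\disc Q\asymp a_1a_2a_3\le a_3^3$; it is enough to bound $a_3$. Writing $\by=\bx+\bv$ and $y_3$ for its third coordinate, the distance of $\by$ to the plane $\real\be_1+\real\be_2$ gives $Q(\by)\ge h^2y_3^2$ with $h^2=\disc Q/\disc(Q|_{\lan\be_1,\be_2\ran})\asymp a_3$. Hence for an integer $n$ with $k\le n\le M:=a_3$ every solution of $f(\bx)=n$ has $y_3^2\le (n-\mu)/h^2=O_k(1)$ (using $-\mu\le R^2=O(a_3)$), so $x_3$ is confined to a set of $O_k(1)$ integers, \emph{independently of} $a_3$. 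For each admissible value $x_3=t$ the function $(x_1,x_2)\mapsto f(x_1,x_2,t)$ is an integer-valued positive binary quadratic polynomial, and the integrality of $f$ forces both the discriminant of its quadratic part and the denominator of its completing-the-square shift to be $O(a_1a_2)$.

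Now I would invoke the classical theorem of Landau and Bernays that a binary quadratic form represents only $O\!\big(M/\sqrt{\log M}\big)$ distinct integers up to $M$; combined with the trivial lattice-point bound $O\!\big(M/\sqrt{a_1a_2}\big)$ for the values of a single slice, this should show that each slice contributes only $o(M)$ distinct integers in $[k,M]$ as $a_3=M\to\infty$, uniformly in $a_1,a_2$. Summing over the $O_k(1)$ admissible slices, $f$ would represent only $o(M)$ integers in $[k,M]$, whereas representing all integers $\ge k$ demands all $\sim M$ of them. This contradiction forces $a_3\le C(k)$ and hence $\disc Q\le C(k)^3$, completing (a). I expect the main obstacle to be precisely this last step: making the density-zero estimate for binary quadratic polynomials \emph{uniform} in the discriminant and the coset of the slice, so that the two complementary bounds—thinness for small $a_1a_2$, lattice-point count for large $a_1a_2$—together beat $M$ for every admissible slice at once.
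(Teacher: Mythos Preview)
Your reduction to (a)--(c) is correct, and (b), (c) follow as you say once (a) holds. The slicing for (a) is also right: for $n\le M:=a_3$ one gets $|y_3|$ bounded by an absolute constant, hence $O(1)$ slices, and each slice $g_t(x_1,x_2):=f(x_1,x_2,t)$ is itself a positive integer-valued binary polynomial, so the same integrality argument you used for $\bv$ shows its completing-the-square shift lies in $\tfrac14 L_q^{\#}\bmod L_q$, a group of order $O(a_1a_2)$. The obstacle you flag---uniformity of the Landau--Bernays bound in the discriminant and the coset---is real but surmountable by a dichotomy: if $a_1a_2>B$ the crude lattice-point count $O(M/\sqrt{a_1a_2}+\sqrt{M})$ per slice already suffices once $B$ is large (depending only on the number of slices); if $a_1a_2\le B$ there are only finitely many reduced $q$ and finitely many cosets for the shift, so the Landau--Bernays constant is uniform over this finite set, giving $O_B(M/\sqrt{\log M})$ per slice. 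Either way the $O(1)$ slices together represent fewer than $M-k$ integers in $[k,M]$ once $a_3=M$ is large, bounding $a_3$.

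The paper takes a genuinely different and more elementary route, avoiding any analytic density estimate. After normalizing $m_f=0$ it first bounds $\mu_1$ and then $\mu_2$ by the same kind of pigeonhole/lattice-point count you would use in the large-$a_1a_2$ regime (via Lemma~\ref{reduction} it obtains the explicit box $|x_1|\le 30$, $|x_2|\le 21$, $|x_3|\le 8$ whenever $f(\bx)<\min\{\tfrac32\mu_3,\tfrac72\mu_2,31\mu_1\}$). With $\mu_1,\mu_2,a_{12}$ now bounded, the binary part $q$ is determined up to finitely many choices, and the decisive step is Lemma~\ref{2variables}, a purely local construction: for each of the seventeen slices $g_t$, $|t|\le 8$, pick an odd prime $p_t$ at which $-\disc q$ is a nonresidue (so $q$ is anisotropic over $\z_{p_t}$), and use the Chinese Remainder Theorem to manufacture a single $N\ge k$ with $\ord_{p_t}(N-c_t+q(\bw_t))=1$ for every $t$; then no slice can represent $N$. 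Since $f$ does represent $N$, this forces $|x_3|\ge 9$ and hence $\mu_3\le\tfrac23 N$. Your density argument is conceptually cleaner and bounds all three minima at once; the paper's local-obstruction argument needs nothing beyond CRT and quadratic residues and yields completely explicit constants.
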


An integral polynomial is called {\em regular} if it represents all the integers that are represented by the polynomial itself over $\z_p$ for every prime $p$ including $p = \infty$ (here $\z_\infty = \mathbb R$ by convention).  In other words, $f(\bx)$ is regular if
\begin{equation} \label{hasse}
(\ref{1steqn}) \mbox{ is soluble in $\z_p$ for every $p \leq \infty$ } \Longrightarrow (\ref{1steqn}) \mbox{ is soluble in $\z$}.
\end{equation}
Watson \cite{w1, w2} showed that up to equivalence there are only finitely many primitive positive definite regular integral quadratic forms in three variables.   A list containing all possible candidates of equivalence classes of these regular quadratic forms is compiled by Jagy, Kaplansky, and Schiemann in \cite{jks}. This list contains 913 candidates and all but twenty two of them are verified to be regular.  Recently Oh \cite{o} verifies the regularity of eight of the remaining twenty two forms.  As a first step to understand regular quadratic polynomials in three variables, we prove the following in Section \ref{regularpoly}.

\begin{thm} \label{regular3}
There are only finitely many primitive regular triangular forms in three variables.
\end{thm}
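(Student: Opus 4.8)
The plan is to convert the representation problem for a triangular form into a representation problem for a coset of a diagonal ternary $\z$-lattice, and then to run a Watson-style reduction that bounds the discriminant of a regular form. Throughout I may assume $a_1 \leq a_2 \leq a_3$ and $\gcd(a_1,a_2,a_3)=1$, since permutation and common scaling are harmless. The starting point is the elementary identity $8\,a_i\,\tfrac{x_i(x_i+1)}{2} + a_i = a_i(2x_i+1)^2$, which upon summation gives
\[
8\,\Delta(a_1,a_2,a_3)(\bx) + s = a_1(2x_1+1)^2 + a_2(2x_2+1)^2 + a_3(2x_3+1)^2, \qquad s := a_1+a_2+a_3 .
\]
Thus $\Delta(a_1,a_2,a_3)$ represents an integer $n$ over $\z$ (respectively over $\z_p$) if and only if the diagonal form $\langle a_1,a_2,a_3\rangle$ represents $8n+s$ by a vector all of whose coordinates are odd, i.e. a vector lying in the coset $\bu + 2\z^3$ with $\bu=(1,1,1)$. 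Regularity of $\Delta$ is therefore equivalent to a local--global principle for representing the integers of the arithmetic progression $s+8\z$ by this fixed coset $\bu + 2L$ of the lattice $L=\langle a_1,a_2,a_3\rangle$, and I would phrase everything from here on in terms of the coset and its localizations.

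Next I would adapt Watson's transformations to the coset setting. For each prime $p$ the goal is a transformation $\Delta \mapsto \Delta'$ on triangular forms, modeled on Watson's $\lambda_p$ for ternary lattices, that strips high powers of $p$ out of the coefficients $a_i$ while preserving the regularity of the associated coset. At an odd prime $p$ the factors $8$ and $2$ are units, so the local picture is governed entirely by the Jordan splitting of $\langle a_1,a_2,a_3\rangle$ over $\z_p$; the transformation replaces coefficients divisible by large powers of $p$ by smaller ones, and one checks prime by prime that a locally represented integer for the new form lifts back to one for the old, so that regularity is inherited. The upshot I aim for is that every regular ternary triangular form is obtained, by a bounded sequence of such controlled operations, from a reduced regular triangular form in which every prime divides $\disc\langle a_1,a_2,a_3\rangle$ to a bounded exponent.

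It then remains to bound the discriminant of the reduced forms, and here I would invoke the genus theory of the ternary coset. When $\disc\langle a_1,a_2,a_3\rangle$ is large, the genus of the coset splits into many classes, and the number of spinor genera likewise grows; since there are only finitely many spinor-exceptional integers, they cannot account for the entire progression $s+8\z$. Consequently some integer in $s+8\z$ is represented by the genus of $\bu+2L$ but not by $\bu+2L$ itself, which translates back into an integer represented everywhere locally by $\Delta$ but not over $\z$, contradicting regularity. This forces the discriminant of a reduced regular triangular form to be bounded by an absolute constant, leaving finitely many reduced forms and hence, by the previous step, finitely many regular ternary triangular forms. The step I expect to be most delicate is the dyadic analysis interacting with the coset condition $\bx\in\bu+2L$: the odd-coordinate restriction and the progression $s+8\z$ couple nontrivially with the Jordan form at $2$, so both the definition of the Watson-type transformation at $p=2$ and the verification that representation by the genus of the coset fails for some admissible integer require careful dyadic bookkeeping rather than the clean odd-prime argument.
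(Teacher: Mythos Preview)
Your coset translation and the Watson-type descent at odd primes are exactly what the paper does (its Lemma~3.3), so that part of the outline is sound. Two points, however, are off.

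First, your worry about $p=2$ is misplaced. A primitive triangular form is automatically universal over $\z_2$ (Lemma~3.1 in the paper: already a single term $\alpha\,x(x+1)/2$ with $\alpha$ odd is $2$-adically universal). So no dyadic Watson map is needed and no delicate $2$-adic bookkeeping arises; the odd-coordinate restriction is invisible at every odd prime since $2\in\z_p^\times$. The descent is carried out only at odd primes, reducing to forms that ``behave well'' (at most one $a_i$ divisible by each odd prime).

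Second, and more seriously, the discriminant bound you propose via genus and spinor-genus growth is not an argument. In the positive definite ternary setting a large class number for the coset does \emph{not} by itself produce an integer in $s+8\z$ that is represented by $\gen(\bu+2L)$ but not by $\bu+2L$; many classes can coexist while each still represents every sufficiently large eligible integer. Spinor exceptions are finite, but the obstruction to regularity is not confined to spinor exceptions: within a single spinor genus there can be several classes, and nothing in your sketch locates a concrete missed integer. The paper does something quite different. After the odd-prime descent it bounds the successive minima $\mu_1\le\mu_2\le\mu_3$ by \emph{constructing} integers that are locally represented (hence represented, by regularity) yet force $\mu_i$ to be small. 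The tools are a pigeonhole-type count (Lemma~3.4) to find integers avoiding a finite set of bad primes, an explicit analytic estimate (Proposition~3.6, a Burgess-type bound) to control the last surviving prime $\ell$, and finally hand-built primes $q_1,\dots,q_4$ to pin down $\mu_1,\mu_2,\mu_3$. Your plan would need to replace the spinor-genus heuristic with this kind of explicit construction, or else supply a genuine quantitative link between class-number growth and failure of regularity for cosets, which is not available off the shelf.
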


A quadratic polynomial $f(\bx)$ is called {\em complete} if it takes the form
$$f(\bx) = Q(\bx) + 2B(\bv, \bx) + Q(\bv) = Q(\bx + \bv).$$
Every quadratic polynomial is complete after adjusting the constant term suitably.   In Section \ref{coset}, we will describe a geometric approach of studying the arithmetic of complete quadratic polynomials.  In a nut shell, a complete integral quadratic polynomial $f(\bx)$ is just a coset $M + \bv$ of an integral $\z$-lattice $M$ on a quadratic $\q$-space with a quadratic map $Q$, and solving the diophantine equation $f(\bx) = a$ is the same as finding a vector $\be$ in $M$ such that $Q(\be + \bv) = a$.  The definition of the class number of a coset will be introduced, and it will be shown in Section \ref{coset} that this class number is always finite and can be viewed as a measure of obstruction of the local-to-global implication in (\ref{hasse}).

In the subsequent sections, especially in Section \ref{coset}, we will complement our discussion with the geometric language of quadratic spaces and lattices.  Let $R$ be a PID.  If $M$ is a $R$-lattice on some quadratic space over the field of fractions of $R$ and $A$ is a symmetric matrix, we shall write ``$M \cong A$" if $A$ is the Gram matrix for $M$ with respect to some basis of $M$.  The discriminant of $M$ is the determinant of one of its Gram matrices.  An $n \times n$ diagonal matrix with $a_1, \ldots, a_n$ as its diagonal entries is written as $\langle a_1, \ldots, a_n \rangle$.  Any other unexplained notation and terminology in the language of quadratic spaces and lattices used in this paper can be found in \cite{ca}, \cite{ki}, and \cite{om}.

\section{Universal Ternary Quadratic Polynomials} \label{universal}

We start this section with a technical lemma which will be used in the proof of Theorem \ref{thmin3}.

\begin{lem} \label{2variables}
Let $q(\bx)$ be a positive definite binary quadratic form and $b$ be the associated bilinear form.  For $i = 1, \ldots, t$, let $f_i(\bx) = q(\bx) + 2b(\bw_i, \bx) + c_i$ be a positive integral quadratic polynomial with quadratic part $q(\bx)$.  For any integer $k \geq 0$, there exists a positive integer $N\geq k$, bounded above by a constant depending only on $q(\bx)$, $k$, and $t$, such that $N$ is not represented by $f_i(\bx)$ for every $i = 1, \ldots, t$.
\end{lem}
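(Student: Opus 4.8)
The plan is to bound, uniformly over the whole family $f_1,\dots,f_t$, the number of integers up to a given threshold that each $f_i$ can represent, and then to pick the threshold large enough that these finitely many represented sets cannot jointly exhaust the integers in $[k,B]$.

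First I would complete the square. Since $q$ is positive definite, each $f_i$ can be written as $f_i(\bx)=q(\bx+\bw_i)+d_i$ with $d_i=c_i-q(\bw_i)$. Two features of this normal form are crucial, and both follow from the hypothesis that each $f_i$ is integer-valued with the \emph{same} quadratic part $q$. On the one hand, evaluating $f_i$ at $\0,\pm\be_1,\pm\be_2,\be_1+\be_2$ and taking suitable combinations shows that $2q(\bx)\in\z$ on $\z^2$ and that $4b(\bw_i,\be_j)\in\z$; inverting the (nondegenerate) Gram matrix of $q$ then forces $\ell\bw_i\in\z^2$ for a single integer $\ell=\ell(q)$ depending only on $q$. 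On the other hand, positivity of $f_i$ together with the covering radius of $q$ gives $d_i\ge-\kappa_q$ for a constant $\kappa_q$ depending only on $q$, since $\min_{\bz\in\z^2}q(\bz+\bw_i)$ is bounded by $\kappa_q$ uniformly in $\bw_i$. Consequently, if $f_i(\bx)=n$ then $\by:=\ell(\bx+\bw_i)\in\z^2$ satisfies $q(\by)=\ell^2(n-d_i)$, an integer lying in $[0,\ell^2(n+\kappa_q)]$. Thus every integer $n\le X$ represented by $f_i$ yields an integer $\le \ell^2(X+\kappa_q)$ represented by the \emph{single} form $q$ on $\z^2$.

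The key arithmetic input enters next. By the theorem of Landau and Bernays, the number of integers up to $Y$ represented by a fixed positive definite binary quadratic form is at most $C_q\,Y/\sqrt{\log Y}$ for $Y\ge 2$, where $C_q$ depends only on $q$. Combining this with the reduction above, the number $R_i(X)$ of integers $n\le X$ represented by $f_i$ satisfies $R_i(X)\le C_q'\,X/\sqrt{\log X}$ for all $X\ge X_0(q)$, with $C_q'$ and $X_0(q)$ depending only on $q$ and, in particular, independent of $i$ and of the data $\bw_i,c_i$. This uniformity is exactly what allows the final bound to depend only on $q$, $k$, and $t$.

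Finally I would count. The integers in $[k,B]$ number $B-k+1$, while at most $\sum_{i=1}^{t}R_i(B)\le t\,C_q'\,B/\sqrt{\log B}$ of them are represented by some $f_i$. Hence it suffices to choose $B\ge 2k$ (so that $B-k+1>B/2$) large enough that $t\,C_q'\,B/\sqrt{\log B}<B/2$, i.e.\ $\log B>4t^2(C_q')^2$; any $B\ge\max\{2k,\,X_0(q),\,\exp(4t^2(C_q')^2)\}$ works, and such a $B$ is a constant depending only on $q$, $k$, and $t$. For this $B$ there is an integer $N$ with $k\le N\le B$ represented by none of the $f_i$, as claimed. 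The main obstacle is not the pigeonhole step but securing the \emph{uniform, effective} density-zero estimate: one must know both that a binary form represents only $o(Y)$ integers and that all $f_i$ reduce to the single fixed form $q$ (the denominator bound $\ell=\ell(q)$), so that the $o(Y)$ saving is shared across the entire family and beats the factor $t$.
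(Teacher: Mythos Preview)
Your argument is correct. The minor imprecision that $q$ need not be integer-valued on $\z^2$ is harmless: you have shown that $4$ times the Gram matrix of $q$ is integral, so one may apply Landau--Bernays to the classically integral form $4q$ and absorb the scaling into $C_q'$. With that adjustment everything goes through.

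However, your route is genuinely different from the paper's. The paper does not invoke any analytic density estimate. Instead, it exploits a \emph{local} obstruction: it picks $t$ odd primes $p_1<\cdots<p_t$ (depending only on the discriminant $d$ of $q$) at which $-d$ is a nonresidue, so that $q$ is anisotropic unimodular over each $\z_{p_i}$. This forces $\bw_i\in\z_{p_i}^2$ and $q(\bw_i)\in\z_{p_i}$. Then, by the Chinese Remainder Theorem, the paper chooses the smallest $N\ge k$ with $N\equiv p_i+c_i-q(\bw_i)\pmod{p_i^2}$ for each $i$; this makes $\ord_{p_i}(N-c_i+q(\bw_i))=1$, so $N$ is not even $p_i$-adically represented by $f_i$. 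The resulting bound is $N\le k+\prod_{i=1}^t p_i^2$, completely explicit once the primes are fixed and of size roughly $\exp(O(t\log t))$.

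What each approach buys: the paper's argument is entirely elementary (no Landau--Bernays, just CRT and the local theory of binary forms), and the bound it produces is both sharper in $t$ and immediately explicit. Your counting argument, on the other hand, is more robust---it needs no information about $q$ beyond positive definiteness, and would adapt to situations where convenient local obstructions are unavailable---but it rests on an analytic input and yields a bound of order $\exp(O_q(t^2))$. Both are perfectly adequate for the application in the proof of Theorem~\ref{thmin3}, where one only needs \emph{some} bound depending on $q$, $k$, and $t$.
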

\begin{proof}
Let $d$ be the discriminant of $q(\bx)$.  Choose odd primes $p_1 < \cdots < p_t$ such that $-d$ is a nonresidue mod $p_i$ for all $i$.  Then for every $i = 1, \ldots, t$, $q(\bx)$ is anisotropic $\z_{p_i}$-unimodular.  In particular, $q(\bx) \in \z_{p_i}$, and hence $2b(\bw_i, \bx)$ as well,  are in $\z_{p_i}$ for all $\bx \in \z_{p_i}^2$.  This implies that $\bw_i \in \z_{p_i}^2$ and so $q(\bw_i) \in \z_{p_i}$.    Let $N$ be the smallest positive integer satisfying $N \geq k$ and
$$N \equiv p_i + c_i - q(\bw_i) \mod p_i^2, \quad \mbox{ for } i = 1, \ldots, t.$$
Then for every $i$, $\ord_{p_i}(N - c_i + q(\bw_i)) = 1$ and so $N - c_i + q(\bw_i)$ is not represented by $q(\bx + \bw_i)$ over $\z_{p_i}$.  Thus $N$ is not represented by $f_i(\bx)$.
\end{proof}

A positive ternary quadratic polynomial $f(\bx) = Q(\bx) + 2B(\bv, \bx) + m$ is called {\em Minkowski reduced}, or simply {\em reduced},  if its quadratic part is Minkowski reduced and it attains its minimum at the zero vector.  This means that the quadratic part $Q(\bx)$ is of the form $\bx A \bx^t$, where $A$ is a Minkowski reduced symmetric matrix.  So, if $\be_1, \be_2, \be_3$ is the standard basis for $\z^3$, then $Q(\be_1) \leq Q(\be_2) \leq Q(\be_3)$.  Also, $Q(\bx) + 2B(\bv, \bx) \geq 0$ for all $\bx \in \z^3$, and hence
\begin{equation} \label{inequality}
2\vert B(\bv, \be_i) \vert \leq Q(\be_i) \mbox{ for } i = 1, 2, 3.
\end{equation}

\begin{lem} \label{reduced}
Every positive ternary quadratic polynomial is equivalent to a reduced ternary quadratic polynomial.
\end{lem}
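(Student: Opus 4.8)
My plan is to exploit the fact that the equivalence (\ref{equiv}) has two independent ingredients that can be deployed in turn: a linear substitution $\bx \mapsto \bx T$ with $T \in \text{GL}_3(\z)$, and an integral translation $\bx \mapsto \bx + \bx_0$ with $\bx_0 \in \z^3$. Writing $f(\bx) = Q(\bx) + 2B(\bv, \bx) + m$, the substitution replaces the quadratic part by $\bx \mapsto Q(\bx T)$ (and changes the linear part), whereas a translation leaves the quadratic part $Q$ completely unchanged and only modifies the linear and constant terms. So I would use the substitution to put $Q$ into Minkowski reduced form, and then use a translation to move the minimum to the origin, a translation being harmless to the first task precisely because it does not disturb the quadratic part.

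First I would invoke the classical reduction theory of positive definite quadratic forms: since $Q$ is positive definite, there exists $T \in \text{GL}_3(\z)$ such that $\bx \mapsto Q(\bx T)$ is Minkowski reduced. Setting $h(\bx) = f(\bx T)$, this produces a positive integral quadratic polynomial equivalent to $f$ whose quadratic part is Minkowski reduced.

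Next I would translate. As observed in the introduction, positive definiteness of the quadratic part guarantees that $h$ attains its absolute minimum on $\z^3$, say at $\bx_0 \in \z^3$. I would then set $g(\bx) = h(\bx + \bx_0)$; since $h(\bx) = f(\bx T)$ this equals $f(\bx T + \bx_0 T)$, exhibiting $g$ as equivalent to $f$ through (\ref{equiv}) with the pair $(T, \bx_0 T)$, where $\bx_0 T \in \z^3$. The translation does not alter the quadratic part, so $g$ retains the Minkowski reduced quadratic part of $h$; and $g(\0) = h(\bx_0)$ is the minimum of $h$, which equals the minimum of $g$ because translation permutes $\z^3$. Hence $g$ attains its minimum at $\0$ and is therefore reduced.

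The only substantial input is the existence of a Minkowski reduced representative in the $\text{GL}_3(\z)$-orbit of $Q$, which is entirely classical; the rest is bookkeeping. The one point requiring care, and the closest thing to an obstacle, is the order of the two operations: the quadratic form must be reduced first, because a subsequent translation preserves reducedness, whereas performing the translation first and then applying the substitution would in general carry the minimizer away from the origin.
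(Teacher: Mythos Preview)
Your proposal is correct and follows essentially the same approach as the paper: first apply a $T \in \text{GL}_3(\z)$ to Minkowski-reduce the quadratic part, then translate by the minimizer to place the minimum at the origin. You supply a bit more bookkeeping (noting explicitly that translation preserves the quadratic part and that the resulting equivalence uses the pair $(T, \bx_0 T)$), but the argument is the same.
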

\begin{proof}
Let $f(\bx)$ be a positive ternary quadratic polynomial.  It follows from reduction theory that there exists $T \in \text{GL}_n(\z)$ such that the quadratic part of $f(\bx T)$ is Minkowski reduced.  If $f(\bx T)$ attains its minimum at $\bx_0$, then the polynomial $g(\bx):= f(\bx T + \bx_0)$, which is equivalent to $f(\bx)$, is reduced.
\end{proof}

\begin{lem} \label{reduction}
Let $Q(\bx)$ be a positive definite reduced ternary quadratic form.  Then for any $(x_1, x_2, x_3) \in \z^3$,
$$Q(x_1\be_1 + x_2\be_2 + x_3\be_3) \geq \frac{1}{6}(Q(\be_1)x_1^2 + Q(\be_2)x_2^2 + Q(\be_3)x_3^2).$$
\end{lem}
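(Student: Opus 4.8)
The plan is to read the inequality as a semidefiniteness statement. Let $A=(a_{ij})$ be the Gram matrix of $Q$ in the basis $\be_1,\be_2,\be_3$, so that $a_{ij}=B(\be_i,\be_j)$ and $a_{ii}=Q(\be_i)$, and put $a:=Q(\be_1)\le b:=Q(\be_2)\le c:=Q(\be_3)$ and $D=\langle a,b,c\rangle$. Since $6Q(\bx)-\bigl(Q(\be_1)x_1^2+Q(\be_2)x_2^2+Q(\be_3)x_3^2\bigr)=\bx(6A-D)\bx^t$, the lemma is exactly the assertion that $6A-D$ is positive semidefinite. Reducedness will be used through two batches of inequalities: the first-level relations $2|a_{12}|\le a$, $2|a_{13}|\le a$, $2|a_{23}|\le b$ coming from $Q(\be_i)\le Q(\be_i\pm\be_j)$, and the third-vector relations
\[
a+b+2\epsilon_1a_{13}+2\epsilon_2a_{23}+2\epsilon_1\epsilon_2a_{12}\ge 0\qquad(\epsilon_1,\epsilon_2\in\{\pm1\})
\]
coming from $Q(\be_3)\le Q(\be_3+\epsilon_1\be_1+\epsilon_2\be_2)$.

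First I would dispose of the easy principal minors of $6A-D$. The diagonal entries are $5a,5b,5c>0$, and each $2\times2$ principal minor is nonnegative by the first-level relations together with $a\le b\le c$; for example the leading one is $25ab-36a_{12}^2\ge25ab-9a^2\ge16a^2>0$, and the remaining two are handled identically. Everything therefore comes down to the sign of the determinant,
\[
\det(6A-D)=125\,abc+432\,a_{12}a_{13}a_{23}-180\bigl(a\,a_{23}^2+b\,a_{13}^2+c\,a_{12}^2\bigr),
\]
which I must show is nonnegative.

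Since replacing $\be_i$ by $-\be_i$ leaves the diagonal of $A$ untouched and only flips signs of off-diagonal entries (so that $a_{12}a_{13}a_{23}$ is invariant), I may normalize so that $a_{12},a_{13}\ge0$; the two remaining cases are then $a_{23}\ge0$ and $a_{23}<0$. In the first case the triple product is $\ge0$ and must be retained, and keeping it while bounding the square terms against $abc$ via $a\le b\le c$ yields $\det(6A-D)\ge0$. The hard case is $a_{23}<0$, where the triple product is negative: here the naive bounds $a\,a_{23}^2\le\frac14ab^2$, $b\,a_{13}^2\le\frac14a^2b$, $c\,a_{12}^2\le\frac14a^2c$ only yield a negative lower bound, so the first-level relations alone cannot suffice. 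This is the crux of the proof, and I would resolve it with the third-vector relation for $\epsilon_1=-1,\epsilon_2=+1$, namely $a+b\ge 2a_{12}+2a_{13}+2|a_{23}|$, which prevents the three off-diagonal magnitudes from being simultaneously large and is exactly what absorbs the deficit left by the negative triple product. That some such extra input is unavoidable is clear from the example $a=b=c$ with all off-diagonal entries equal to $-sa$: this satisfies $2|a_{ij}|\le a_{ii}$ whenever $s\le\frac12$, but has $\det(6A-D)<0$ once $s>\frac5{12}$, and it is disqualified only by the third-vector relations (which force $s\le\frac13$).

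Finally, I record a reformulation that may organize the computation: it suffices to prove the three bounds $Q(\bx)\ge\frac12Q(\be_i)x_i^2$ for $i=1,2,3$, since summing them gives $3Q(\bx)\ge\frac12\sum_iQ(\be_i)x_i^2$, which is the lemma. Each of these is a positive-semidefiniteness statement of the same type, and while this repackaging does not remove the negative-triple-product obstacle, the symmetry among the three sub-statements can make the bookkeeping of the decisive case cleaner.
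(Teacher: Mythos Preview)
Your proposal has a genuine gap: the hard case $a_{23}<0$ is never actually carried out. You identify the relevant third-vector inequality $a+b\ge 2a_{12}+2a_{13}+2|a_{23}|$ and assert that it ``absorbs the deficit,'' but you stop short of the algebra that would establish $\det(6A-D)\ge0$. Even the ``easy'' case is only asserted (``keeping it while bounding the square terms against $abc$ \ldots\ yields $\det(6A-D)\ge0$''); that claim happens to be true, but it requires a short optimization over the box $0\le a_{12},a_{13}\le a/2$, $0\le a_{23}\le b/2$ which you do not supply.

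More importantly, your closing remark is where you miss the mark. You propose the reformulation $Q(\bx)\ge\tfrac12 Q(\be_k)x_k^2$ for each $k$ and then dismiss it: ``this repackaging does not remove the negative-triple-product obstacle.'' In fact this repackaging is exactly the paper's proof, and it \emph{does} remove the obstacle entirely, with no case analysis. One completes the square successively in $x_i$, then $x_j$, obtaining
\[
Q(\bx)\;\ge\;\frac{D}{C_{ij}}\,x_k^2,
\]
where $D=\det A$ and $C_{ij}=Q(\be_i)Q(\be_j)-B(\be_i,\be_j)^2$ is the $2\times2$ principal minor. The single global input replacing your sign and third-vector analysis is the classical Minkowski bound for reduced ternary forms,
\[
Q(\be_1)Q(\be_2)Q(\be_3)\le 2D,
\]
which together with $C_{ij}\le Q(\be_i)Q(\be_j)$ gives $D/C_{ij}\ge Q(\be_k)/2$. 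Summing over $k$ yields the lemma. So the key idea you were lacking is not a sharper cubic inequality in the $a_{ij}$, but rather to invoke the discriminant through $\mu_1\mu_2\mu_3\le 2D$; this is what makes the per-coordinate bound $Q(\bx)\ge\tfrac12\mu_k x_k^2$ immediate.
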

\begin{proof} Let $C_{ij} = Q(\be_i)Q(\be_j) - B(\be_i, \be_j)^2$, which is positive if $i \neq j$ because $Q(\bx)$ is reduced.  For any permutation $i, j, k$ of the integers $1, 2, 3$, we have
$$Q(\be_k)C_{ij} \leq Q(\be_1)Q(\be_2)Q(\be_3) \leq 2 D,$$
where $D$ is the discriminant of $Q$. Now, by completing the squares,
\begin{eqnarray*}
Q(x_1\be_1 + x_2\be_2 + x_3\be_3) & \geq & Q(\be_i)(x_i + \cdots)^2 + \frac{C_{ij}}{Q(\be_j)}(x_j + \cdots )^2 + \frac{D}{C_{ij}} x_k^2\\
    & \geq & \frac{Q(\be_k)}{2} x_k^2.
\end{eqnarray*}
Thus
$$3(Q(x_1\be_1 + x_2\be_2 + x_3\be_3)) \geq \frac{1}{2}(Q(\be_1)x_1^2 + Q(\be_2)x_2^2 + Q(\be_3)x_3^2),$$
and the lemma follows immediately.
\end{proof}

We are now ready to prove Theorem \ref{thmin3}.

\begin{proof}[Proof of Theorem \ref{thmin3}]
Let $k$ be a fixed nonnegative integer.  By virtue of Lemma \ref{reduced}, it suffices to show that there are only finitely many reduced positive ternary integral quadratic polynomials which represent all positive integers $\geq k$.  By adjusting the constant terms of these quadratic polynomials, we may assume that their minimum is 0.

Let $f(\bx) = Q(\bx) + 2B(\bv, \bx)$ be a reduced positive ternary integral quadratic polynomial with minimum $0$.  Let $\be_1, \be_2, \be_3$ be the standard basis for $\z^3$. For simplicity, for each $i = 1, 2, 3$, we denote $Q(\be_i)$ by $\mu_i$ and $B(\bv, \be_i)$ by $w_i$.  Furthermore, for $i \neq j$, let $a_{ij}$ be $B(\be_i, \be_j)$.   We assume throughout below that $f(\bx)$ represents all integers $\geq k$.  The proof will be complete if we can show that $\mu_3$ is bounded above by a constant depending only on $k$. From now on, $(x_1, x_2, x_3)$ always denotes a vector in $\z^3$.

By (\ref{inequality}) and Lemma \ref{reduction},
\begin{eqnarray*}
f(x_1, x_2, x_3) & \geq & \sum_{i=1}^3 \left(\frac{1}{6} \mu_i x_i^2 - 2\vert w_i x_i \vert\right)\\
    & \geq & \sum_{i = 1}^3 \mu_i \left(\frac{1}{6} x_i^2 - \vert x_i \vert \right),
\end{eqnarray*}
and so if $\vert x_3 \vert \geq 9$, we have
$$f(x_1, x_2, x_3) \geq -\frac{3}{2}\mu_1 - \frac{3}{2}\mu_2 + \frac{9}{2} \mu_3 \geq \frac{3}{2}\mu_3.$$

Suppose that $\vert x_3 \vert \leq 8$.  Since $2\vert a_{12} \vert \leq \mu_1$, one obtains $\frac{\mu_1}{2}x_1^2 + 2a_{12}x_1x_2 + \frac{\mu_2}{2}x_2^2 \geq 0$ for all $(x_1, x_2) \in \z^2$.  So, if $\vert x_2 \vert \geq 22$, then
\begin{eqnarray*}
f(x_1, x_2, x_3) & \geq & \frac{\mu_1}{2}x_1^2 + 2(a_{13}x_3 + w_1)x_1 + \frac{\mu_2}{2} x_2^2 + 2(a_{23}x_3 + w_2)x_2 + f(0,0,x_3) \\
    & \geq & - \frac{81}{2}\mu_1 + 44\mu_2\\
    & \geq & \frac{7}{2}\mu_2.
\end{eqnarray*}
Let us assume further that $\vert x_2 \vert \leq 21$.  If, in addition, $\vert x_1 \vert \geq 31$, then
\begin{eqnarray*}
f(x_1, x_2, x_3) & = & \mu_1 x_1^2 + 2(a_{12}x_2 + a_{13}x_3 + w_1)x_1 + f(0, x_2, x_3)\\
    & \geq & \mu_1(x_1^2 - 30 \vert x_1\vert)\\
    & \geq & 31 \mu_1.
\end{eqnarray*}
Therefore, we have
$$f(x_1, x_2, x_3) \geq \gamma(f): = \min \left \{ \frac{3}{2}\mu_3, \frac{7}{2}\mu_2, 31\mu_1 \right \}$$
unless
$$\vert x_1 \vert \leq 30, \quad \vert x_2 \vert \leq 21, \,\, \mbox{ and }\,\, \vert x_3 \vert \leq 8.$$
In particular, this means that there are at most $61\times 43 \times 17$ choices of $(x_1, x_2, x_3)$ for which $f(x_1, x_2, x_3) < \gamma(f)$, and thus there are at most $61\times 43\times 17$ distinct positive integers less than $\gamma(f)$ which may be represented by $f$.  So, if $\gamma(f) \geq 61\times 43 \times 17 + 2 + k$, then $f(x_1, x_2, x_3)$ does not represent at least one integer among $k + 1, k + 2, \ldots, k + 61\times 43 \times 17 + 1$.  Consequently,
$$\frac{3}{2}\mu_1 \leq \gamma(f) \leq k + 61 \times 43 \times 17 + 1.$$
Let $\eta$ be the smallest positive integer satisfying
$$43 \times 17 \times [2(15 + \sqrt{225 + k + \eta}) + 1] < \eta.$$
Suppose that $\frac{3}{2}\mu_2 > k + \eta$.  Let $s$ be a positive integer $\leq k + \eta$.  If $f(x_1, x_2, x_3) = s$, then $\vert x_2 \vert \leq 21$ and $\vert x_3 \vert \leq 8$; thus, as shown before,
\begin{eqnarray*}
f(x_1, x_2, x_3) & =  & \mu_1 x_1^2 + 2(a_{12}x_2 + a_{13}x_3 + w_1)x_1 + f(0, x_2, x_3)\\
    & \geq & \mu_1(x_1^2 - 30 \vert x_1\vert)\\
    & \geq & x_1^2 - 30 \vert x_1 \vert.
\end{eqnarray*}
So, if $\vert x_1 \vert > 15 + \sqrt{225 + k + \eta}$, then $f(x_1, x_2, x_3) > k +  \eta$.  Therefore, the number of vectors $(x_1, x_2, x_3) \in \z^3$ satisfying $k + 1 \leq f(x_1, x_2, x_3) \leq k + \eta$ is not bigger than
$$43 \times 17 \times [2(15 + \sqrt{225 + k +  \eta}) + 1],$$
which is strictly less than $\eta$.  This is impossible, which means that
$$\mu_2 \leq \frac{2(k + \eta)}{3}.$$

Recall that if $\vert x_3 \vert \geq 9$, then $f(x_1, x_2, x_3) \geq \frac{3}{2}\mu_3$.  It follows from Lemma \ref{2variables} that there exists a positive integer $N \geq k$ which is not represented by $f(x_1, x_2, t)$ for any integer $t \in [-8, 8]$, and this $N$ is bounded above by a constant depending only on $k, \mu_1, \mu_2$, and $a_{12}$ (note that $2\vert a_{12}\vert \leq \mu_1$).  This means that whenever $f(x_1, x_2, x_3) = N$, we must have $\vert x_3 \vert \geq 9$ and so
$$\mu_3 \leq \frac{2N}{3}.$$
This completes the proof.
\end{proof}

\section{Regular Ternary Triangular Forms} \label{regularpoly}

A triangular form $\Delta(\alpha_1, \ldots, \alpha_n)$ is said to be {\em primitive} if $\gcd(\alpha_1, \ldots, \alpha_n) = 1$.  Its discriminant, denoted $d(\Delta)$, is defined to be the product $\alpha_1\cdots \alpha_n$.  By completing the squares, it is easy to see that $\Delta(\alpha_1, \ldots, \alpha_n)$ represents an integer $m$ if and only if the equation
\begin{equation} \label{3to2}
\alpha_1(2x_1 + 1)^2 + \cdots + \alpha_n(2x_n + 1)^2 = 8m + (\alpha_1 + \cdots + \alpha_n)
\end{equation}
is soluble in $\z$.  Let $M$ be the $\z$-lattice with quadratic map $Q$ and an orthogonal basis $\{\be_1, \ldots, \be_n\}$ such that $M \cong \langle 4\alpha_1, \ldots, 4\alpha_n \rangle$. Then (\ref{3to2}) is soluble in $\z$ if and only if $8m + (\alpha_1 + \cdots + \alpha_n)$ is represented by the coset $M + \bv$, where $\bv = (\be_1 + \cdots + \be_n)/2$, that is, there exists a vector $\bx \in M$ such that $Q(\bx + \bv) = 8m + (\alpha_1 + \cdots + \alpha_n)$.

Let $p$ be an odd prime.  If $M_p$ is the $\z_p$-lattice $\z_p\otimes M$, then $M_p + \bv = M_p$.  Therefore,  (\ref{3to2}) is soluble in $\z_p$ if and only if $M_p$ represents $8m + (\alpha_1 + \cdots + \alpha_n)$.  In particular, $\Delta(\alpha_1, \ldots, \alpha_n)$ is universal over $\z_p$ if and only if $M_p$ is universal.

\begin{lem}\label{at2}
A primitive triangular form is universal over $\z_2$.
\end{lem}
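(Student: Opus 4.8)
The plan is to work directly with the equivalent criterion (\ref{3to2}). Writing $s = \alpha_1 + \cdots + \alpha_n$, the identity obtained there by completing the square is purely algebraic and remains valid over $\z_2$ (note $x(x+1)/2 \in \z_2$ whenever $x \in \z_2$, since one of $x, x+1$ is even). Hence $\Delta(\alpha_1,\ldots,\alpha_n)$ represents an integer $m$ over $\z_2$ if and only if the equation
$$\alpha_1(2x_1+1)^2 + \cdots + \alpha_n(2x_n+1)^2 = 8m + s$$
is soluble with $x_1, \ldots, x_n \in \z_2$. Thus universality over $\z_2$ reduces to showing that the polynomial $\sum_{i=1}^n \alpha_i(2x_i+1)^2$ represents every element of $s + 8\z_2$ as the $x_i$ range over $\z_2$; indeed $8m + s$ runs through all such elements as $m$ runs through $\z_2$, and in particular through all nonnegative integers.

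The key local fact I would isolate is the description of the image of the map $x \mapsto (2x+1)^2$ on $\z_2$. As $x$ ranges over $\z_2$ the quantity $2x+1$ ranges over all units of $\z_2$, and since every odd $2$-adic integer squares into $1 + 8\z_2$ while, conversely, $1 + 8\z_2$ is precisely the group of unit squares $(\z_2^\times)^2$, one has
$$\{(2x+1)^2 : x \in \z_2\} = 1 + 8\z_2.$$
Consequently each $(2x_i+1)^2$ may be written as $1 + 8t_i$ with $t_i$ an arbitrary element of $\z_2$, and then
$$\sum_{i=1}^n \alpha_i(2x_i+1)^2 = s + 8\sum_{i=1}^n \alpha_i t_i.$$

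It remains to invoke primitivity. Since $\gcd(\alpha_1,\ldots,\alpha_n) = 1$, there exist integers $u_1, \ldots, u_n$ with $\alpha_1 u_1 + \cdots + \alpha_n u_n = 1$; hence for any prescribed $c \in \z_2$ the choice $t_i = c\,u_i$ yields $\sum_i \alpha_i t_i = c$. Therefore $\sum_i \alpha_i(2x_i+1)^2$ attains every value in $s + 8\z_2$, which is exactly what is required. I do not anticipate a genuine obstacle: the lemma is essentially the combination of the structure $(\z_2^\times)^2 = 1 + 8\z_2$ with the primitivity hypothesis, and the only points demanding care are the clean verification that $x \mapsto (2x+1)^2$ surjects onto $1 + 8\z_2$ and that the independently chosen $t_i$ can realize an arbitrary target through the $\z$-linear relation expressing $\gcd(\alpha_1,\ldots,\alpha_n) = 1$.
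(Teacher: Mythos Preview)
Your argument is correct. Both you and the paper rely on the same local fact, namely that $(\z_2^\times)^2 = 1 + 8\z_2$ (the Local Square Theorem), so that $x \mapsto (2x+1)^2$ surjects onto $1+8\z_2$. The difference lies only in how primitivity is exploited. The paper observes that $\gcd(\alpha_1,\ldots,\alpha_n)=1$ forces some $\alpha_i$ to be odd, hence a unit in $\z_2$; setting all other variables to $0$ reduces the problem to showing that the \emph{single} polynomial $\alpha_i\,x(x+1)/2$ is universal over $\z_2$, which is immediate from the Local Square Theorem. You instead keep all variables in play, parametrize each $(2x_i+1)^2$ as $1+8t_i$, and invoke a B\'ezout relation $\sum \alpha_i u_i = 1$ to hit an arbitrary target. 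This is perfectly valid but slightly more elaborate than necessary: once one $\alpha_i$ is a $2$-adic unit, varying that single $t_i$ already sweeps out all of $\z_2$, so the full B\'ezout identity is not needed. In short, the paper's route is a one-variable reduction, yours is an $n$-variable linear-combination argument; both are short and rest on the same structural input.
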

\begin{proof}
It suffices to prove that for an odd integer $\alpha$, the polynomial $\alpha x(x + 1)/2$ is universal over $\z_2$.  But this is clear by the Local Square Theorem \cite[63:1]{om} or \cite[Lemma 1.6, page 40]{ca}.
\end{proof}

\begin{lem} \label{atodd}
Let $p$ be an odd prime and $\alpha, \beta, \gamma$ be $p$-adic units.  Then over $\z_p$,
\begin{enumerate}
\item[(1)] $\Delta(\alpha,\beta)$ represents all integers $m$ for which $8m + \alpha + \beta \not \equiv 0$ mod $p$;
\item[(2)] $\Delta(\alpha, \beta)$ is universal if $\alpha + \beta  \equiv 0 \mod p$;
\item[(3)] $\Delta(\alpha, \beta, \gamma)$ is universal.
\end{enumerate}
\end{lem}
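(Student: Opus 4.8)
The plan is to reduce everything to a single fact about representation of $p$-adic units by quadratic forms over $\z_p$, using the dictionary established just before the lemma: for an odd prime $p$, solubility of $\Delta(\alpha_1,\ldots,\alpha_n)$ at $m$ over $\z_p$ is equivalent to $M_p = \langle 4\alpha_1,\ldots,4\alpha_n\rangle$ representing $8m + (\alpha_1 + \cdots + \alpha_n)$ over $\z_p$. Since $p$ is odd, the factor $4$ is a unit, so $M_p \cong \langle \alpha_1,\ldots,\alpha_n\rangle$ up to scaling by a square unit; thus the question becomes which values the unimodular diagonal form $\langle \alpha_1,\ldots,\alpha_n\rangle$ represents, where each $\alpha_i$ is a $p$-adic unit. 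I would set $t := 8m + (\alpha_1 + \cdots)$ throughout and translate each claim into a statement about representing $t$.

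For part (1), I would observe that $\langle\alpha,\beta\rangle$ is a binary unimodular $\z_p$-lattice. The standard local theory (for instance \cite[92:1b]{om}) says a unimodular $\z_p$-lattice with $p$ odd represents every $p$-adic unit; more precisely a binary unimodular form represents every unit unless it is anisotropic, and even the anisotropic binary form represents all units (it fails only to represent the nonzero multiples of $p$ that force the wrong discriminant class). The cleanest route is: if $8m + \alpha + \beta \not\equiv 0 \pmod p$ then $t$ is a unit, and any unimodular $\z_p$-lattice of rank $\ge 2$ represents all units, so $\langle\alpha,\beta\rangle$ represents $t$. I would invoke this directly rather than recompute it.

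For parts (2) and (3) the target $t$ may be divisible by $p$, so representing units no longer suffices and I must control the discriminant. For (2), when $\alpha + \beta \equiv 0 \pmod p$ the form $\langle\alpha,\beta\rangle$ has discriminant $\alpha\beta \equiv -\alpha^2$, i.e.\ $-\disc$ is a square, so the binary form is isotropic (a hyperbolic plane over $\z_p$ after scaling); a unimodular isotropic binary $\z_p$-lattice is universal, representing every element of $\z_p$, hence every $t$, giving universality of $\Delta(\alpha,\beta)$. For (3), a unimodular $\z_p$-lattice of rank $3$ over an odd $p$ is always universal — it represents all of $\z_p$ regardless of discriminant — because a ternary unimodular space over $\q_p$ is isotropic and the lattice inherits universality; this immediately yields that $\Delta(\alpha,\beta,\gamma)$ represents every $m$ over $\z_p$.

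The main obstacle is bookkeeping rather than depth: I must make sure the scaling by $4$ and the shift to $t = 8m + \sum \alpha_i$ are handled cleanly, since ``$M_p$ represents $t$'' must be matched against the classical statements about \emph{unimodular} lattices representing units or all of $\z_p$. The one genuine subtlety is the anisotropic case in (1): I want $\langle\alpha,\beta\rangle$ to represent the unit $t$ even when the form is anisotropic, and here it is cleanest to cite the fact that a unimodular $\z_p$-lattice represents every unit that is a norm, together with the observation that over the residue field every unit class is hit; equivalently one reduces mod $p$ and lifts via Hensel, which works precisely because $t$ is a unit. Once the unit-versus-non-unit dichotomy is framed correctly, each of the three parts follows from a one-line appeal to the structure of unimodular $\z_p$-lattices.
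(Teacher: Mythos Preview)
Your proposal is correct and follows essentially the same route as the paper: reduce via the shift $t = 8m + \sum\alpha_i$ to representation by the unimodular $\z_p$-lattice $\langle\alpha_1,\ldots,\alpha_n\rangle$, then invoke \cite[92:1b]{om} for (1) and (3) and identify the hyperbolic plane in (2) from $\disc(\langle\alpha,\beta\rangle) \equiv -\alpha^2$. The extra discussion you give of the anisotropic case and Hensel lifting is unnecessary---the cited result already covers all units regardless of isotropy---but it does no harm.
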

\begin{proof}
The binary $\z_p$-lattice $\langle \alpha, \beta \rangle$ represents all $p$-adic units \cite[92:1b]{om}.  Therefore, it represents all integers $m$ for which $8m + \alpha + \beta \not \equiv 0 \mod p$.  This proves (1).

In (2), the condition on $\alpha$ and $\beta$ implies that the $\z_p$-lattice $\langle \alpha,\beta \rangle$ is isometric to the hyperbolic plane which is universal.  For (3), it follows from \cite[92:1b]{om} that any unimodular $\z_p$-lattice of rank at least three is universal.
\end{proof}

Recall that a triangular form is regular if it represents all positive integers that are represented by the triangular form itself over $\z_p$ for all primes $p$.  For example, every universal triangular form is regular.   The following lemma is a ``descending trick" which transforms a regular ternary triangular form to another one with smaller discriminant.

\begin{lem} \label{watson}
Let $q$ be an odd prime and $a, b, c$ be positive integers which are not divisible by $q$.  Suppose that $\Delta(a, q^rb, q^sc)$ is regular, with $1 \leq r \leq s$.  Then $\Delta(q^{2-\delta}a, q^{r - \delta}b, q^{s - \delta}c)$ is also regular, where $\delta = \min\{2, r\}$.
\end{lem}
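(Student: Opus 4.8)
The plan is to reduce the regularity of the transformed form entirely to that of $\Delta(a,q^rb,q^sc)$ by setting up an explicit correspondence between representations of the two forms at every place and globally. Write $f=\Delta(a,q^rb,q^sc)$ and $g=\Delta(q^{2-\delta}a,q^{r-\delta}b,q^{s-\delta}c)$, and for a nonnegative integer $n$ put
$$m:=q^{\delta}n+\frac{a(q^2-1)}{8},$$
which is an integer because $q$ is odd and hence $8\mid q^2-1$. Using the defining equation (\ref{3to2}), I would first show that a representation of $n$ by $g$, via odd integers $(Y_1,Y_2,Y_3)$, turns into a representation of $m$ by $f$ through the substitution $X_1=qY_1$, $X_2=Y_2$, $X_3=Y_3$ followed by multiplication of the whole equation by $q^{\delta}$; a direct computation shows the trace constant on the right becomes $8m+a+q^rb+q^sc=8q^{\delta}n+q^2a+q^rb+q^sc$, exactly what $f$ requires. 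Since $q$ is odd, $qY_1$ is again odd, so this is a genuine representation.

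Next I would check that this correspondence persists locally at every prime. For $p\neq q$ (including $p=2$) the element $q$ is a unit in $\z_p$, so $X_1\mapsto qX_1$ is a bijection of $\z_p$ preserving the parity condition at $2$, and multiplication by $q^{\delta}$ is invertible; hence $f$ represents $m$ over $\z_p$ whenever $g$ represents $n$ over $\z_p$. Over $\z_\infty=\real$ the correspondence is just the sign condition, since $n\ge 0$ forces $m\ge 0$. Thus if $n$ is represented by $g$ over $\z_p$ for every $p\le\infty$, then $m$ is represented by $f$ over $\z_p$ for every $p\le\infty$.

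The heart of the argument, and the step I expect to be the main obstacle, is the reverse passage at $q$ and over $\z$: recovering a representation of $n$ by $g$ from one of $m$ by $f$. Here the substitution $X_1=qY_1$ cannot simply be inverted, since $q$ is not a unit in $\z_q$. Instead I would exploit the special value of $m$ to force divisibility of the first coordinate. If $f$ represents $m$, then
$$aX_1^2+q^rbX_2^2+q^scX_3^2=8q^{\delta}n+q^2a+q^rb+q^sc.$$
Reducing modulo $q^{\delta}$ (that is, modulo $q$ when $r=1$ and modulo $q^2$ when $r\ge 2$) kills every term except $aX_1^2$, because $r,s\ge\delta$ and $2\ge\delta$; since $q\nmid a$ this gives $X_1^2\equiv 0$, whence $q\mid X_1$. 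Writing $X_1=qZ_1$ with $Z_1$ odd and dividing the displayed equation by $q^{\delta}$ produces precisely the equation (\ref{3to2}) for $g$ representing $n$, via $(Z_1,X_2,X_3)$. The same congruence argument is valid over $\z_q$.

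Finally I would assemble these pieces. Suppose $g$ represents a nonnegative integer $n$ over $\z_p$ for every $p\le\infty$. By the local correspondence, $f$ represents $m$ over every $\z_p$ and over $\real$; since $f$ is regular, $f$ represents $m$ over $\z$. The forced-divisibility descent then converts this global representation of $m$ by $f$ into a global representation of $n$ by $g$. Hence $g$ represents every integer that it represents everywhere locally, i.e. $g$ is regular. The only genuinely delicate point is the forced divisibility $q\mid X_1$; everything else is unit rescaling and bookkeeping of the $q$-adic valuations of the three coefficients.
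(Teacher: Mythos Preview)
Your argument is correct and is essentially the same as the paper's proof: the paper first observes that it suffices to show $\Delta(q^2a,q^rb,q^sc)$ is regular (this being $q^{\delta}$ times $g$), then carries out exactly your forward substitution and forced-divisibility descent. The only point you leave implicit is the forward direction at $p=q$, but the substitution $X_1=qY_1$ plus multiplication by $q^{\delta}$ from your first paragraph is a valid (if non-invertible) map over $\z_q$ as well, so the local implication holds there too.
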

\begin{proof}
It suffices to show that $\Delta(q^2a, q^rb, q^sc)$ is regular.  Suppose that $m$ is a positive integer represented by $\Delta(q^2a, q^rb, q^sc)$ over $\z_p$ for all primes $p$.  Then the equation
\begin{equation} \label{1}
8m + (q^2a + q^rb + q^sc) = q^2a (2x_1 + 1)^2 + q^rb (2x_2 + 1)^2 + q^sc (2x_3 + 1)^2
\end{equation}
is soluble in $\z_p$ for every prime $p$.  Since $q$ is odd, we can say that
\begin{equation}\label{2}
8m + (q^2a + q^rb + q^sc) = a(2x_1 + 1)^2 + q^rb (2x_2 + 1)^2 + q^sc (2x_3 + 1)^2
\end{equation}
is also soluble in $\z_p$ for every prime $p$.  Notice that $q^2 \equiv 1$ mod 8, and so $8m + (q^2a + q^rb + q^sc) = 8m' + (a + q^rb + q^sc)$ for some integer $m'$.  Thus, the regularity of $\Delta(a, q^rb, q^sc)$ implies that (\ref{2}) is soluble in $\z$.  Let $(x_1, x_2, x_3) \in \z^3$ be a solution to (\ref{2}).  Then $(2x_1 + 1)$ must be divisible by $q$ because $q \mid m$ by (\ref{1}), and we can write $(2x_1 + 1)$ as $q(2y_1 + 1)$ for some $y_1 \in \z$.  So $(y_1, x_2, x_3)$ is an integral solution to (\ref{1}), which means that $m$ is in fact represented by $\Delta(q^2a, q^rb, q^sc)$.
\end{proof}

The following lemma will be used many times in the subsequent discussion. It is a reformulation of \cite[Lemma 3]{kko}.

\begin{lem}\label{kkolemma}
Let $T$ be a finite set of primes and $a$ be an integer not divisible by any prime in $T$.  For any integer $d$, the number of integers in the set $\{d, a + d, \ldots, (n-1)a + d \}$ that are not divisible by any prime in $T$ is at least
$$n\frac{\tilde{p}-1}{\tilde{p} + t - 1} - 2^t + 1,$$
where $t = \vert T \vert$ and $\tilde{p}$ is the smallest prime in $T$.
\end{lem}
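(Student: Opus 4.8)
The plan is to estimate the count by an inclusion--exclusion (Legendre sieve) over the subsets of $T$, and then to reduce the resulting main term to the stated closed form by a telescoping inequality. First I would reindex the set as $\{ka + d : 0 \le k \le n-1\}$; since a nonempty $T$ forces $a \ne 0$, these are $n$ distinct integers, so counting the integers divisible by no prime in $T$ is the same as counting the indices $k$ for which $ka + d$ has this property. Writing $T = \{p_1, \dots, p_t\}$ and $P_S = \prod_{p \in S} p$ for $S \subseteq T$, let $N_S$ be the number of $k \in \{0, \dots, n-1\}$ with $P_S \mid ka + d$. Legendre's formula then gives that the quantity $G$ we want equals $\sum_{S \subseteq T} (-1)^{\vert S \vert} N_S$.

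The key local input is that $a$ is a unit modulo each $p \in T$, hence modulo every $P_S$; so the congruence $ka + d \equiv 0 \pmod{P_S}$ pins $k$ down to a single residue class modulo $P_S$, and therefore $N_S = n/P_S + \epsilon_S$ with $\vert \epsilon_S \vert < 1$. In particular $N_\emptyset = n$ exactly, so $\epsilon_\emptyset = 0$. Substituting and separating the main term,
\[
G = n \sum_{S \subseteq T} \frac{(-1)^{\vert S \vert}}{P_S} + \sum_{\emptyset \ne S \subseteq T} (-1)^{\vert S \vert}\epsilon_S = n \prod_{p \in T}\left(1 - \frac{1}{p}\right) + \sum_{\emptyset \ne S \subseteq T} (-1)^{\vert S \vert}\epsilon_S.
\]
Since there are $2^t - 1$ nonempty subsets and each satisfies $\vert \epsilon_S \vert < 1$, the error sum is strictly larger than $-(2^t - 1)$, and hence $G > n \prod_{p \in T}(1 - 1/p) - 2^t + 1$.

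It then remains to show $\prod_{p \in T}(1 - 1/p) \ge (\tilde p - 1)/(\tilde p + t - 1)$, which is where the specific shape of the bound appears and which I regard as the main point. Ordering the primes as $\tilde p = p_1 < p_2 < \cdots < p_t$, they are distinct integers with smallest value $\tilde p$, so $p_i \ge \tilde p + i - 1$; as $x \mapsto 1 - 1/x$ is increasing this gives $1 - 1/p_i \ge (\tilde p + i - 2)/(\tilde p + i - 1)$. Multiplying over $i = 1, \dots, t$ telescopes to $(\tilde p - 1)/(\tilde p + t - 1)$. Combining this with the inequality above yields $G > n(\tilde p - 1)/(\tilde p + t - 1) - 2^t + 1$, which is the claim. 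The sieve bookkeeping and the single-residue-class count are routine; the one subtlety worth recording is that $\epsilon_\emptyset = 0$, which is exactly what controls the error by $2^t - 1$ rather than $2^t$ and lands on the stated $-2^t + 1$.
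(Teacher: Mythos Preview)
Your argument is correct. The paper does not actually prove this lemma; it simply records it as a reformulation of \cite[Lemma~3]{kko}, so there is no in-paper proof to compare against. Your self-contained Legendre sieve, using that $a$ is a unit modulo each $P_S$ to get $N_S = n/P_S + \epsilon_S$ with $|\epsilon_S|<1$ (and $\epsilon_\emptyset=0$, giving the error bound $2^t-1$), together with the telescoping inequality $\prod_{i=1}^{t}(1-1/p_i)\ge(\tilde p-1)/(\tilde p+t-1)$ obtained from $p_i\ge \tilde p+i-1$, is the standard direct proof and yields the stated bound (in fact with strict inequality).
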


For the sake of convenience, we say that a ternary triangular form $\Delta(\alpha, \beta, \gamma)$ {\em behaves well} if the unimodular Jordan component of the $\z_p$-lattice $\langle \alpha, \beta, \gamma \rangle$ has rank at least two, or equivalently, $p$ does not divide at least two of $\alpha, \beta$, and $\gamma$.  For a ternary triangular form $\Delta$, we can rearrange the variables so that $\Delta = \Delta(\mu_1, \mu_2, \mu_3)$ with $\mu_1 \leq \mu_2 \leq \mu_3$.  Collectively, we call these $\mu_i$ the successive minima of $\Delta$.

In what follows, an inequality of the form $A \ll B$ always means that there exists a constant $k > 0$ such that $\vert A \vert \leq k\vert B\vert$.  A real-valued function in several variables is said to be bounded if its absolute value is bounded above by a constant independent of the variables.

\begin{prop} \label{well}
There exists an absolute constant $C$ such that if $\Delta$ is a primitive regular ternary triangular form which behaves well at all odd primes, then $d(\Delta) \leq C$.
\end{prop}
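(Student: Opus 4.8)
The plan is to reduce the proposition to three boundedness statements about the successive minima $\mu_1 \le \mu_2 \le \mu_3$ of $\Delta$, whose product is $d(\Delta)$, and then to close a self-referential estimate. Write $s = \mu_1 + \mu_2 + \mu_3$ and let $t$ be the number of odd primes dividing $d(\Delta)$. Since $\Delta$ behaves well at every odd prime $p$, the prime $p$ divides at most one of $\mu_1,\mu_2,\mu_3$, so the $\mu_i$ are pairwise coprime away from $2$. The first step is a local-to-global input. At an odd prime $p \nmid d(\Delta)$ the lattice $\langle \mu_1,\mu_2,\mu_3\rangle$ is unimodular of rank three, hence universal by Lemma \ref{atodd}(3); at an odd prime $p \mid d(\Delta)$ the two coefficients not divisible by $p$ span a rank-two unimodular component, which represents every $p$-adic unit by Lemma \ref{atodd}(1); and $\Delta$ is universal over $\z_2$ by Lemma \ref{at2}. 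Combining these with the dictionary (\ref{3to2}) yields the crucial statement: $\Delta$ represents over every $\z_p$ each nonnegative integer $m$ for which $8m+s$ is coprime to the odd part of $d(\Delta)$. Regularity then promotes this to an honest integral representation, and I will use this fact ($*$) repeatedly.

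Second, I would bound $\mu_1\mu_2$. If $m < \mu_3$ is represented by $\Delta$, the largest variable must satisfy $x_3 \in \{0,-1\}$, so $m$ is represented by the binary triangular form $\Delta(\mu_1,\mu_2)$. I now count in the window $0 \le m < \mu_3$. By Lemma \ref{kkolemma} (applied to the progression $8m+s$, whose difference $8$ is coprime to every odd prime), the number of $m$ in this range with $8m+s$ coprime to the odd part of $d(\Delta)$ is at least $\mu_3 \frac{\tilde p - 1}{\tilde p + t - 1} - 2^t + 1$, where $\tilde p \ge 3$ is the smallest odd prime dividing $d(\Delta)$; by ($*$) every such $m$ is represented by $\Delta$, hence by $\Delta(\mu_1,\mu_2)$. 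On the other hand the number of integers below $\mu_3$ represented by the positive binary form $\Delta(\mu_1,\mu_2)$ is $O\!\left(\mu_3/\sqrt{\mu_1\mu_2}\right)$ by an area estimate. Comparing the two counts, the factors of $\mu_3$ cancel and give $\sqrt{\mu_1\mu_2} \ll (\tilde p + t - 1)/(\tilde p - 1) \ll t$, so $\mu_1\mu_2 \ll t^2$. (If instead $\mu_3$ is so small that the $-2^t$ term swamps the main term, then $d(\Delta) \le \mu_3^3$ is already controlled by $2^{O(t)}$ and the final step applies verbatim.)

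Third, I would bound $\mu_3$ in terms of $\mu_1,\mu_2$ via Lemma \ref{2variables}. Assume $\mu_3$ exceeds the bound $N_0$ below. Applying Lemma \ref{2variables} to the single binary polynomial $\Delta(\mu_1,\mu_2)$ furnishes an integer it does not represent, and I strengthen the construction so that the chosen $N$ also keeps $\Delta$ locally universal at the relevant bad primes: at each odd $p \mid \mu_1\mu_2$, and at each $p \mid \mu_3$ where $\langle \mu_1,\mu_2\rangle$ is anisotropic, I impose $8N+s \not\equiv 0 \pmod p$ (a single forbidden residue; when $\langle \mu_1,\mu_2\rangle$ is isotropic at $p$ the component is already universal and no condition is needed), while the auxiliary primes of Lemma \ref{2variables} do not divide $d(\Delta)$ and cause no conflict. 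A second use of Lemma \ref{kkolemma} locates such an $N$ with $N \le N_0$, where $N_0$ is polynomial in $\mu_1\mu_2$ times $2^{O(t)}$. By ($*$) and regularity $\Delta$ represents $N$; since $N < \mu_3$ this forces $\Delta(\mu_1,\mu_2)$ to represent $N$, contradicting the choice of $N$. Hence $\mu_3 \le N_0 \ll \mathrm{poly}(\mu_1\mu_2)\cdot 2^{O(t)}$.

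Finally I would close the loop. Assembling $\mu_1\mu_2 \ll t^2$ with $\mu_3 \ll \mathrm{poly}(\mu_1\mu_2)\cdot 2^{O(t)}$ gives $d(\Delta) = \mu_1\mu_2\mu_3 \le G(t)$ for an explicit $G$ growing at most like a power of $t$ times $2^{O(t)}$. But $t \ll \log d(\Delta)$, so $2^{O(t)} \ll d(\Delta)^{\varepsilon}$ for any fixed $\varepsilon > 0$, whence $d(\Delta) \ll d(\Delta)^{\varepsilon}(\log d(\Delta))^{O(1)}$; choosing $\varepsilon < 1$ forces $d(\Delta)$ below an absolute constant $C$. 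I expect the main obstacle to be the third step: producing one small integer $N$ that is simultaneously unrepresented by the binary form yet locally represented by $\Delta$. The primes dividing the large coefficient $\mu_3$ each delete a residue class, and one must keep $N$ polynomially small while avoiding all of them — which is precisely the purpose of the density bound in Lemma \ref{kkolemma} — with the delicate point being the isotropic/anisotropic dichotomy of $\langle \mu_1,\mu_2\rangle$ at the primes dividing $\mu_3$, which governs whether such a prime imposes any condition at all.
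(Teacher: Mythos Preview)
Your plan has the same skeleton as the paper's: establish the local input $(*)$, bound the minima, then close a self-referential inequality. Your second step---bounding $\mu_1\mu_2$ in one stroke by comparing the sieve count from Lemma~\ref{kkolemma} against the lattice-point count for $\Delta(\mu_1,\mu_2)$---is a pleasant consolidation of what the paper does separately for $\mu_1$ and then $\mu_2$.

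The genuine gap is in the third step. You need an auxiliary prime $q$ with $-\mu_1\mu_2$ a nonresidue mod $q$ \emph{and} $q$ outside the set of ``bad'' primes (else the common difference $q^2$ is inadmissible for Lemma~\ref{kkolemma}, and in fact your congruence mod $q^2$ forces $q\mid 8N+s$ whenever $q\mid\mu_3$). But the bad primes dividing $\mu_3$ are precisely those at which $-\mu_1\mu_2$ is a nonresidue, so you are hunting for a prime with a prescribed quadratic-character value while avoiding up to $t$ primes carrying that \emph{same} value; neither Lemma~\ref{2variables} nor Lemma~\ref{kkolemma} tells you how far you must search. The paper fills this hole with Earnest's Burgess-based estimate, which gives such a $q$ with $q\ll(\mu_1\mu_2)^{7/8}\mathfrak{T}^{1/4}$, where $\mathfrak{T}$ is the product of the bad primes. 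The resulting factor $\mathfrak{T}^{1/2}$ in the bound for $\mu_3$ is exactly what makes the closing inequality $\mathfrak{T}\le d(\Delta)\ll (t2^t)^{19/4}\mathfrak{T}^{1/2}$ work, using that $\mathfrak{T}$, a product of $t$ distinct primes, grows at least like $t!$. Your asserted bound $N_0\ll\mathrm{poly}(\mu_1\mu_2)\cdot 2^{O(t)}$ suppresses this $\mathfrak{T}$-dependence and is not justified by the lemmas you cite. A secondary slip: from $t\ll\log d(\Delta)$ alone one cannot deduce $2^{O(t)}\ll d(\Delta)^{\varepsilon}$ for every $\varepsilon>0$; you need the primorial bound $d(\Delta)\ge\prod_{i\le t}p_i\gg e^{ct\log t}$ (equivalently $t\ll\log d(\Delta)/\log\log d(\Delta)$), which is in effect how the paper closes.
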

\begin{proof}
Let $\mu_1\leq \mu_2\leq \mu_3$ be the successive minima of $\Delta$, and let $M$ be the $\z$-lattice $\langle 4\mu_1, 4\mu_2, 4\mu_3 \rangle$.  Let $T$ be the set of odd primes $p$ for which $M_p$ is not split by the hyperbolic plane.  Then $T$ is a finite set.  Let $t$ be the size of $T$, $\tilde{p}$ be the smallest prime in $T$, and $\omega = (\tilde{p} + t - 1)/(\tilde{p} - 1)$.   Note that, since $\tilde{p} \geq 2$, we have $\omega \leq t + 1$.  Let $\eta = (\mu_1 + \mu_2 + \mu_3)$ and $\mathfrak T$ be the product of primes in $T$.  It follows from Lemmas \ref{at2} and \ref{atodd} and the regularity of $\Delta$ that $\Delta$ represents every positive integer $m$ for which  $8m + \eta$ is relatively prime to $\mathfrak T$.

By Lemma \ref{kkolemma}, there exists a positive integer $k_1 < (t+1)2^t$ such that $8k_1 + \eta$ is relatively prime to $\mathfrak T$.  Therefore, $k_1$ is represented by $\Delta$ and hence
$$\mu_1 \leq (t+1)2^t \ll t2^t.$$
For any positive integer $n$, the number of integers between 1 and $n$ that are represented by the triangular form $\Delta(\mu_1)$ is at most $2\sqrt{n}$.  Therefore, by virtue of Lemma \ref{kkolemma}, if $n \geq 4(t+1)^2 + 3(t+1)2^t$, there must be a positive integer $k_2 \leq n$ such that $8k_2 + \eta$ is relatively prime to $\mathfrak T$ and $k_2$ is not represented by $\Delta(\mu_1)$.  This implies that
$$\mu_2 \leq 4(t+1)^2 + 3(t+1)2^t \ll t2^t.$$

Let $\mathfrak A$ be the product of primes in $T$ that do not divide $\mu_1\mu_2$.  Following the argument in \cite[page 862]{e}, we find that there must be an odd prime $q$ outside $T$ such that $-\mu_1\mu_2$ is a nonresidue mod $q$ and $q \ll (\mu_1\mu_2)^{\frac{7}{8}} {\mathfrak A}^{\frac{1}{4}}$.  Since $\mathfrak A \leq \mathfrak T$, we have
$$q \ll (\mu_1\mu_2)^{\frac{7}{8}} {\mathfrak T}^{\frac{1}{4}} \ll (t2^t)^{\frac{7}{8}}{\mathfrak T}^{\frac{1}{4}}.$$
Fix a positive integer $m \leq q^2$ such that
$$8m + \mu_1 + \mu_2 \equiv q \mod q^2.$$
For any integer $\lambda$, $8(m + \lambda q^2) + \mu_1 + \mu_2$ is not represented by the binary lattice $\langle \mu_1, \mu_2 \rangle$, which means that $m + \lambda q^2$ is not represented by $\Delta(\mu_1, \mu_2)$.   However, by Lemma \ref{kkolemma}, there must be a positive integer $k_3 \leq (t+1)2^t$ such that $8q^2k_3 + 8m + \eta$ is relatively prime to $\mathfrak T$.  Then $m + q^2k_3$ is an integer represented by $\Delta$ but not by $\Delta(\mu_1,\mu_2)$.  As a result,
$$\mu_3 \leq m + q^2k_3 \ll (t 2^t)^{\frac{11}{4}} {\mathfrak T}^{\frac{1}{2}},$$
and hence
$$\mathfrak T \leq d(\Delta) = \mu_1\mu_2\mu_3 \ll (t 2^t)^{\frac{19}{4}} {\mathfrak T}^{\frac{1}{2}}.$$
Since $\mathfrak T$, a product of $t$ distinct primes, grows at least as fast as $t!$, the above inequality shows that $t$, and hence $\mathfrak T$ as well, must be bounded.  This means that $d(\Delta)$ is also bounded.
\end{proof}

Starting with a primitive regular ternary triangular form $\Delta$, we may apply Lemma \ref{watson} successively at suitable odd primes and eventually obtain a primitive regular ternary triangular form $\overline{\Delta}$ which behaves well at all odd primes.  It is also clear from Lemma \ref{watson} that $d(\overline{\Delta})$ divides $d(\Delta)$.  Let $\ell$ be an odd prime divisor of $d(\Delta)$.  If $\ell$ divides $d(\overline{\Delta})$, then $\ell$ is bounded by Proposition \ref{well}.  So we assume from now on that $\ell$ does not divide $d(\overline{\Delta})$.  Our next goal is to bound $\ell$.

When we obtain $\overline{\Delta}$ from $\Delta$, we may first apply Lemma \ref{watson} at all primes $p$ not equal to $\ell$.  So, there is no harm to assume from the outset that $\Delta$ behaves well at all primes $p \neq \ell$.   Then, by Lemma \ref{watson}, $\Delta$ can be transformed to a primitive regular ternary triangular form $\tilde{\Delta} = \tilde{\Delta}(a, \ell^2b, \ell^2c)$, with $\ell \nmid abc$, which behaves well at all primes $p \neq \ell$.  Since further application of Lemma \ref{watson} to $\tilde{\Delta}$ results in the triangular form $\overline{\Delta}$, therefore all the prime divisors of $d(\tilde{\Delta})$, except $\ell$, are bounded.

Let $\tilde{T}$ be the set of odd prime divisors of $d(\tilde{\Delta})$ that are not $\ell$.   By Lemmas \ref{at2} and \ref{atodd}, we see that
$\tilde{\Delta}$ represents all positive integers $m$ for which $8m + a + \ell^2b + \ell^2c$ is relatively prime to every prime in $\tilde{T}$ and $(8m + a + \ell^2b + \ell^2c)a$ is a quadratic residue modulo $\ell$.

In order to find integers represented by $\tilde{\Delta}$, we need a result which is a slight generalization of Proposition 3.2 and Corollary 3.3 in \cite{e}.  Let $\chi_1, \ldots, \chi_r$ be Dirichlet characters modulo $k_1, \ldots, k_r$, respectively, $u_1, \ldots, u_r$ be values taken from the set $\{\pm 1\}$, and $\Gamma$ be the least common multiple of $k_1, \ldots, k_r$.  Given a nonnegative number $s$ and a positive number $H$, let $S_s(H)$ be the set of integers $n$ in the interval $(s, s + H)$ which satisfy the conditions
$$\chi_i(n) = u_i \quad \mbox{ for } i = 1, \ldots, r
\text{ and } \gcd(n, X) = 1,$$
where $X$ is a positive integer relatively prime to $\Gamma$.

\begin{prop} \label{eresult}
Suppose that $\chi_1, \ldots, \chi_r$ are independent.  Let $h = \min \{H : S_s(H) > 0 \}$ and $\omega(\Gamma)$ denote the number of distinct prime divisor of $\Gamma$.  Then
\begin{equation} \label{e3.2}
S_s(H) = 2^{-r}\frac{\phi(\Gamma X)}{\Gamma X}H + O\left(H^{\frac{1}{2}}\Gamma^{\frac{3}{16} + \epsilon} X^\epsilon\right),
\end{equation}
and if $r \leq \omega(\Gamma) + 1$, we have
\begin{equation} \label{e3.3}
h \ll \Gamma^{\frac{3}{8} + \epsilon}X^\epsilon,
\end{equation}
where $\phi$ is the Euler's phi-function and the implied constants in both \textnormal{(\ref{e3.2})} and \textnormal{(\ref{e3.3})} depend only on $\epsilon$.
\end{prop}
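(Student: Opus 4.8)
The plan is to detect each defining condition of $S_s(H)$ by an indicator and expand. Since each $\chi_i$ takes values in $\{0,\pm1\}$ and each $u_i=\pm1$, the requirement $\chi_i(n)=u_i$ forces $\gcd(n,k_i)=1$; hence all the conditions together force $\gcd(n,\Gamma X)=1$ (recall $\gcd(\Gamma,X)=1$), and on the set of such $n$ one has $\mathbf 1[\chi_i(n)=u_i]=\tfrac12(1+u_i\chi_i(n))$. Writing $\psi_A=\prod_{i\in A}\chi_i$ for $A\subseteq\{1,\dots,r\}$, expanding the product of these indicators gives
\[
S_s(H)=2^{-r}\sum_{A\subseteq\{1,\dots,r\}}\Bigl(\prod_{i\in A}u_i\Bigr)\sum_{\substack{s<n<s+H\\ \gcd(n,\Gamma X)=1}}\psi_A(n).
\]
Here the hypothesis that $\chi_1,\dots,\chi_r$ are \emph{independent} is essential: it guarantees that $\psi_A$ is nonprincipal for every nonempty $A$, so that only $A=\emptyset$ contributes a main term.

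For $A=\emptyset$ the character $\psi_\emptyset$ is principal, and the inner sum is the count of $n\in(s,s+H)$ with $\gcd(n,\Gamma X)=1$, which by Möbius inversion equals $\tfrac{\phi(\Gamma X)}{\Gamma X}H+O((\Gamma X)^{\epsilon})$; multiplying by $2^{-r}$ yields the main term of (\ref{e3.2}). For each nonempty $A$ I would bound the inner character sum. Detecting $\gcd(n,\Gamma X)=1$ by $\sum_{d\mid\Gamma X}\mu(d)$ and writing $n=dm$ reduces it to sums $\sum_m\psi_A(m)$ over intervals of length $H/d$; since $\psi_A$ is nonprincipal of conductor dividing $\Gamma$, Burgess' bound with parameter $2$ gives $\sum_m\psi_A(m)\ll (H/d)^{1/2}\Gamma^{3/16+\epsilon}$. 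Summing over $d\mid\Gamma X$ costs only $\sum_{d\mid\Gamma X}d^{-1/2}\le\tau(\Gamma X)\ll(\Gamma X)^{\epsilon}$. Crucially, when one sums over the $2^r-1$ nonempty $A$, this count cancels against the prefactor $2^{-r}$, so the resulting error $O(H^{1/2}\Gamma^{3/16+\epsilon}X^{\epsilon})$ has an implied constant depending only on $\epsilon$; this establishes (\ref{e3.2}).

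Finally, (\ref{e3.3}) follows by comparing the two terms of (\ref{e3.2}): one has $S_s(H)>0$ as soon as the main term dominates the error, i.e. once $H^{1/2}\gg 2^{r}\tfrac{\Gamma X}{\phi(\Gamma X)}\Gamma^{3/16+\epsilon}X^{\epsilon}$. Unlike in the error analysis, the prefactor $2^{-r}$ here does not cancel, so squaring produces a factor $4^{r}$ in the threshold for $H$. Using $\tfrac{\Gamma X}{\phi(\Gamma X)}\ll(\Gamma X)^{\epsilon}$, together with the hypothesis $r\le\omega(\Gamma)+1$, which gives $4^{r}\le 4\,(2^{\omega(\Gamma)})^{2}\ll\Gamma^{\epsilon}$, one finds that $S_s(H)>0$ once $H\gg\Gamma^{3/8+\epsilon}X^{\epsilon}$, whence $h\ll\Gamma^{3/8+\epsilon}X^{\epsilon}$.

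I expect the main obstacle to lie in the uniform bookkeeping of the error analysis: interleaving the expansion over subsets $A$ with the Möbius detection of the two coprimality conditions, and verifying that after the scaling $n=dm$ the short incomplete sums still admit Burgess' bound with the stated exponent, so that the divisor sum over $d\mid\Gamma X$ collapses exactly into the factor $X^{\epsilon}$ (this being the one genuinely new feature beyond the cited results of \cite{e}). Tracking the cancellation of $2^{-r}$ against the subset count in (\ref{e3.2}), as opposed to its survival in the threshold for (\ref{e3.3}), is the point where the hypothesis $r\le\omega(\Gamma)+1$ enters and must be used with care.
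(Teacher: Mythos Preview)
Your argument is correct and is precisely the approach the paper intends: the paper's proof simply defers to the proofs of Proposition~3.2 and Corollary~3.3 in \cite{e}, noting only that Burgess's estimate \cite[Theorem~2]{b} applies to an arbitrary interval $(s,s+H)$ rather than just $(0,H)$, and your sketch is exactly that argument written out---the character expansion, M\"obius detection of $\gcd(n,\Gamma X)=1$, Burgess with $r=2$ giving the exponent $3/16$, and the comparison of main and error terms to deduce the bound on $h$.
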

\begin{proof}
We may proceed as in the proofs for Proposition 3.2 and Corollary 3.3 in \cite{e}, but notice that \cite[Lemma 3.1]{e} remains valid if we replace ``$0 < n < H$" by ``$s < n < s + H$" in the summations since Burgess's estimate for character sums  \cite[Theorem 2]{b} holds for any interval of length $H$.
\end{proof}

\begin{lem} \label{primel}
The prime $\ell$ is bounded.
\end{lem}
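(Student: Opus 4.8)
The plan is to produce, via the analytic machinery of Proposition \ref{eresult}, a \emph{small} positive integer that is represented by $\tilde\Delta = \tilde\Delta(a, \ell^2 b, \ell^2 c)$ but that is too small to be represented using its second or third variable; juxtaposing these two facts will pin down $\ell$. Recall from the reduction preceding the lemma that $\ell \nmid abc$, that $\tilde\Delta$ behaves well at every prime $p \neq \ell$, and that the set $\tilde T$ of odd prime divisors of $d(\tilde\Delta)$ other than $\ell$ is bounded; write $P = \prod_{p \in \tilde T} p$ and $\eta' = a + \ell^2 b + \ell^2 c$, so $P$ is bounded. By the criterion recorded just before the lemma, $\tilde\Delta$ represents every positive integer $m$ for which $\gcd(8m + \eta', P) = 1$ and $(8m + \eta')a$ is a quadratic residue prime to $\ell$.

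First I would encode the residue condition as a single character. Let $\chi_\ell$ be the Legendre symbol modulo $\ell$ and set $u_\ell = \chi_\ell(a) \in \{\pm 1\}$, which is well defined since $\ell \nmid a$; then the condition on $(8m+\eta')a$ is exactly $\chi_\ell(8m + \eta') = u_\ell$. Thus I want to count the $m \in [1, M]$ for which $n := 8m + \eta'$ satisfies $\chi_\ell(n) = u_\ell$ and $\gcd(n, P) = 1$. Applying Proposition \ref{eresult} with the single character $\chi_\ell$ (so $r = 1$, $\Gamma = \ell$, $X = P$), along the progression $n \equiv \eta' \pmod 8$ cut out by $n = 8m + \eta'$ — a progression to the fixed bounded modulus $8$, which the Burgess-type estimate underlying Proposition \ref{eresult} accommodates at the cost of the implied constants — the main term \eqref{e3.2} shows that the number of admissible $m \in [1,M]$ is at least
\[ c_0\, M - C\, M^{1/2}\,\ell^{3/16+\epsilon}, \]
where $c_0 \asymp \phi(\ell P)/(\ell P) = \prod_{p \mid \ell P}(1 - 1/p)$ and $C$ is absolute. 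The crucial point is that $c_0$ is bounded below \emph{independently of} $\ell$: the factor $1 - 1/\ell$ is close to $1$ for large $\ell$, while $P$ is bounded, so $\prod_{p \mid P}(1 - 1/p)$ exceeds a fixed positive constant.

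Next I would compare this with the integers represented by $\Delta(a)$ alone. Since $\Delta(a)$ represents $m$ only when $m = a\,x(x+1)/2$, at most $O(\sqrt{M})$ integers in $[1, M]$ are represented by $\Delta(a)$. Choosing $M = C'\ell^{3/8 + \epsilon}$ with $C'$ large enough (depending only on $c_0$ and $C$), the main term dominates both the error term and this $O(\sqrt M)$ count, so there is an admissible $m_2 \le M$ not represented by $\Delta(a)$. By the criterion $m_2$ is represented by $\tilde\Delta$; fixing a representation $m_2 = a\,T(x_1) + \ell^2 b\, T(x_2) + \ell^2 c\, T(x_3)$ with $T(x) = x(x+1)/2 \ge 0$, the fact that $m_2$ is not represented by $\Delta(a)$ forces $T(x_2) \ge 1$ or $T(x_3) \ge 1$, whence $m_2 \ge \min(\ell^2 b, \ell^2 c) \ge \ell^2$. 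Combined with $m_2 \le M \ll \ell^{3/8+\epsilon}$ this gives $\ell^2 \ll \ell^{3/8 + \epsilon}$, i.e. $\ell^{13/8 - \epsilon} \ll 1$, so $\ell$ is bounded.

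I expect the main obstacle to be the uniform-in-$\ell$ count: everything hinges on Proposition \ref{eresult} delivering a main term of density bounded below by an absolute constant together with an error whose exponent of $\ell$ (here $3/16$, coming from Burgess's character-sum bound) is small enough that the balancing scale $M \asymp \ell^{3/8 + \epsilon}$ stays comfortably below $\ell^2$. A secondary technical point is the bookkeeping for the fixed modulus $8$: one must check that restricting $n$ to the single residue class $\eta' \bmod 8$ leaves the lower bound on the density intact, which is harmless since $8$ is fixed and prime to $\ell P$.
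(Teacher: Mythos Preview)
Your argument is correct and reaches the same conclusion via the same analytic tool (Proposition~\ref{eresult}), but it is organized differently from the paper's proof and is in one respect more efficient. The paper proceeds in two steps: first it finds a small admissible $m$ to bound the first successive minimum $\tilde\mu_1 \ll \ell^{1/2}$, then it repeats the count, subtracting the $O(\sqrt{H})$ values hit by $\Delta(\tilde\mu_1)$, to get $\tilde\mu_2 \ll \ell^{1/2}$; it then concludes from $\ell^2 \le \tilde\mu_1\tilde\mu_2 \ll \ell$. You collapse this into a single step by working directly with $\Delta(a)$ rather than with $\Delta(\tilde\mu_1)$: an admissible $m_2 \ll \ell^{3/8+\epsilon}$ not of the form $aT(x)$ must, in any representation by $\tilde\Delta$, pick up a term $\ell^2 b\,T(x_2)$ or $\ell^2 c\,T(x_3)$, forcing $m_2 \ge \ell^2$ immediately. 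This is a genuine simplification: you never need to identify which coefficient is $\tilde\mu_1$ or to run the argument twice.

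The one place where the paper is more careful is the modulus~$8$ bookkeeping, which you flag but wave through. Proposition~\ref{eresult} as stated counts integers in an interval, not in a residue class mod~$8$, and the class $\eta' \bmod 8$ need not be primitive. The paper handles this by writing $\eta' = 2^{\kappa}s_0$ with $s_0$ odd: when $\kappa \ge 3$ it factors out $8$ and applies the proposition directly to $h + 2^{\kappa-3}s_0$; when $\kappa < 3$ it adjoins the characters $\left(\tfrac{-1}{\cdot}\right)$ and $\left(\tfrac{2}{\cdot}\right)$ to cut out the required odd residue class mod $2^{3-\kappa}$, at the cost of enlarging $\Gamma$ to at most $8\ell$ (so the exponent of $\ell$ is unaffected). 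Either of these devices slots straight into your argument and makes it rigorous without changing any of your estimates.
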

\begin{proof}
Let $\tilde{\mu_1} \leq \tilde{\mu_2}$ be the first two successive minima of $\tilde{\Delta}$.    Let $s = a + \ell^2 b + \ell^2 c$ and write $s = 2^\kappa s_0$ with $2 \nmid s_0$.  Suppose that $\kappa \geq 3$.  We apply Proposition \ref{eresult} to the quadratic residue mod $\ell$ character $\chi_\ell$, taking $\epsilon = 1/8$ and $X$ to be the product of the primes in $\tilde{T}$.  So,  there is a positive integer $h \ll \ell^{\frac{1}{2}}$  such that $\chi_\ell(h + 2^{\kappa - 3}s_0) = \chi_\ell(2a)$ and $h + 2^{\kappa - 3} s_0$ is not divisible by any prime in $\tilde{T}$.  Then $\tilde{\Delta}$ represents $h$ and hence $\tilde{\mu_1} \ll \ell^{\frac{1}{2}}$.

If $\kappa < 3$, then we apply Proposition \ref{eresult} again but this time to $\chi_\ell$ and possibly the mod 4 character $\left(\frac{-1}{*} \right)$ and the mod 8 character $\left(\frac{2}{*} \right)$.   We obtain a positive integer $n > s_0$ such that $\chi_\ell(n) = \chi_\ell(2^{\kappa} a)$, $n$ is not divisible by any prime in $\tilde{T}$, $n \equiv s_0$ mod $2^{3 - \kappa}$, and $n - s_0 \ll \ell^{\frac{1}{2}}$.  Then we can write $2^\kappa n = 8m + s$, where $m$ is represented by $\tilde{\Delta}$ and $m \ll \ell^{\frac{1}{2}}$.  So, $\tilde{\mu_1} \ll \ell^{\frac{1}{2}}$ in this case as well.

Now, for any $H > 0$, the number of integers in the interval $(s, s + H)$ that are represented by the triangular form $\Delta(\tilde{\mu_1})$ is equal to $O(\sqrt{H})$.  Thus, by Proposition \ref{eresult} and an argument similar to the one above, we must have $\tilde{\mu_2} \ll \ell^{\frac{1}{2}}$.  Then $\ell^2 \leq \tilde{\mu_1}\tilde{\mu_2} \ll \ell$, and hence $\ell$ is bounded.
\end{proof}

We now present the proof of Theorem \ref{regular3} which asserts that there are only finitely many primitive regular ternary triangular forms.

\begin{proof}[Proof of Theorem \ref{regular3}]
Let $\Delta$ be a primitive regular ternary triangular form, and $\mu_1\leq \mu_2\leq \mu_3$ be its successive minima.  It suffices to show that these successive minima are bounded.  Let $S$ be the set of odd prime divisors of $d(\Delta)$.  It follows from Proposition \ref{well} and Lemma \ref{primel} that all the primes in $S$ are bounded.  Let $\mathfrak S$ be the product of these primes.  It is clear from Lemma \ref{at2} and Lemma \ref{atodd}(3) that $\Delta$ represents $\mathfrak S$ over $\mathbb Z_p$ for all $p \not \in S$.  Also, Lemma \ref{atodd}(1) (if $\mu_1 + \mu_2 \not \equiv 0$ mod $p$) or Lemma \ref{atodd}(2) (if $\mu_1 + \mu_2 \equiv 0$ mod $p$) shows that $\Delta$ represents $\mathfrak S$ over $\mathbb Z_p$ for all primes $p \in S$.  Consequently, $\Delta$ represents $\mathfrak S$ over $\mathbb Z_p$ for all primes $p$.  Since $\Delta$ is regular, it must represent $\mathfrak S$.  This shows that $\mu_1$ is bounded.

Let $q_1$ be the smallest odd prime not dividing $3\mu_1\mathfrak S$, and $q_2$ be the smallest odd prime not dividing $q_1\mu_1\mathfrak S$ for which $8q_2 \mathfrak S\mu_1 + \mu_1^2$ is a nonresidue mod $q_1$.  Such $q_2$ exists because there are at least two nonresidues mod $q_1$.  Note that $q_2\mathfrak S$ is represented by $\Delta$ but not by $\Delta(\mu_1)$.  Therefore, $\mu_2$ is also bounded.

Now, let $q_3$ be the smallest odd prime not dividing $\mathfrak S$ for which $-\mu_1\mu_2$ is a nonresidue mod $q_3$, and $q_4$ be the smallest odd prime not dividing $\mathfrak S$ which satisfies
$$-8q_4 \mathfrak S \equiv \mu_1 + \mu_2 + q_3 \mod q_3^2.$$
Then $q_4\mathfrak S$ is represented by $\Delta$ but not by $\Delta(\mu_1, \mu_2)$, which means that $\mu_3$ is bounded.  This completes the proof.
\end{proof}

\section{Representations of Cosets} \label{coset}

In the previous sections we have seen some connection between the diophantine aspect of quadratic polynomials and the geometric theory of quadratic  spaces and lattices.  In this section we will amplify this connection  by describing a geometric approach of a special, but yet general enough for most practical purpose, family of quadratic polynomials.   Since it will not present any additional difficulty, we shall consider quadratic polynomials over global fields and the Dedekind domains inside.  For simplicity, the quadratic map and its associated bilinear form on any quadratic space will be denoted by $Q$ and $B$ respectively.

Now, unless stated otherwise, $F$ is assumed to be a global field of characteristic not 2 and $\ring$ is a Dedekind domain inside $F$ defined by a Dedekind set of places $\Omega$ on $F$ (see, for example, \cite[\S 21]{om}).   We call a quadratic polynomial $f(\bx)$ over $F$ in variables $\bx = (x_1, \ldots, x_n)$ {\em complete} if $f(\bx) = (\bx + \bv)A (\bx + \bv)^t$, where $A$ is an $n\times n$ nonsingular symmetric matrix over $F$ and $\bv \in F^n$.  It is called {\em integral} if $f(\bx) \in \ring$ for all $\bx \in \ring^n$.  Two quadratic polynomials $f(\bx)$ and $g(\bx)$ are said to be {\em equivalent} if there exist $T \in \text{GL}_n(\ring)$ and $\bx_0 \in \ring^n$ such that $g(\bx) = f(\bx T + \bx_0)$.

On the geometric side, an $\ring$-{\em coset} is a set $M + \bv$, where $M$ is an $\ring$-lattice on an $n$-dimensional nondegenerate quadratic space $V$ over $F$ and $\bv$ is a vector in $V$.   An $\ring$-coset $M + \bv$ is called {\em integral} if $Q(M + \bv) \subseteq \ring$, and is {\em free} if $M$ is a free $\ring$-lattice.  Two $\ring$-cosets $M + \bv$ and $N + \bw$ on two quadratic spaces $V$ and $W$, respectively, are said to be {\em isometric}, written $M + \bv \cong N + \bw$, if there exists an isometry $\sigma: V \longrightarrow W$ such that $\sigma(M + \bv) = N + \bw$.   This is the same as requiring $\sigma(M) = N$ and $\sigma(\bv) \in \bw + N$.  For each $\p \in \Omega$, $\ring_\p$-cosets and isometries between $\ring_\p$-cosets are defined analogously.

As in the case of quadratic forms and lattices, there is a one-to-one correspondence between the set of equivalence classes of complete quadratic polynomials in $n$ variables over $F$ and the set of isometry classes of free cosets on $n$-dimensional quadratic spaces over $F$.  Under this correspondence, integral complete quadratic polynomials corresponds to integral free cosets.


\begin{defn}
Let $M + \bv$ be an $\ring$-coset on a quadratic space $V$.  The genus of $M + \bv$ is the set
$$\text{gen}(M + \bv) = \{K + \bw \mbox{ on } V : K_\p + \bw \cong M_\p + \bv \mbox{ for all  } \p \in \Omega\}.$$
\end{defn}

\begin{lem}
Let $M + \bv$ be an $\ring$-coset on a quadratic space $V$ and let $S$ be a finite subset of $\Omega$.  Suppose that an $\ring_\p$-coset $M(\p) + \bx_\p$ on $V_\p$ is given for each $\p \in
S$.  Then there exists an $\ring$-coset $K + \bz$ on $V$ such that
$$K_\p + \bz = \left \{
\begin{array}{ll}
M(\p) + \bx_\p & \mbox{ if $\p \in S$};\\
M_\p + \bv & \mbox{ if $\p \in \Omega \setminus S$}.
\end{array}
\right .$$ \label{crt}
\end{lem}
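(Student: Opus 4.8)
The plan is to split the coset condition into a lattice condition and a vector condition and to solve each by a local-to-global (Chinese Remainder) construction. Recall that an equality of cosets $K_\p + \bz = N_\p + \bw$ on $V_\p$ holds precisely when $K_\p = N_\p$ and $\bz - \bw \in K_\p$. Hence the desired conclusion is equivalent to producing (i) an $\ring$-lattice $K$ on $V$ with $K_\p = M(\p)$ for $\p \in S$ and $K_\p = M_\p$ for $\p \in \Omega \setminus S$, together with (ii) a vector $\bz \in V$ satisfying $\bz - \bx_\p \in K_\p$ for $\p \in S$ and $\bz - \bv \in K_\p$ for $\p \notin S$.

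First I would construct the lattice $K$. The prescribed data $\{M(\p)\}_{\p \in S}$ together with $\{M_\p\}_{\p \notin S}$ form a coherent family of local lattices, since they coincide with the localizations of the fixed global lattice $M$ at all but the finitely many places in $S$. The standard existence theorem for lattices with prescribed localizations \cite[81:14]{om} then yields a (unique) $\ring$-lattice $K$ on $V$ whose localization at each $\p$ is the prescribed one. This settles the lattice part (i) of every required coset equality at one stroke.

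Next I would produce the vector $\bz$. Set $\by_\p = \bx_\p$ for $\p \in S$ and $\by_\p = \bv$ for $\p \notin S$; the goal is a global $\bz \in V$ with $\bz \equiv \by_\p \pmod{K_\p}$ for every $\p \in \Omega$. I would observe first that this is a genuinely finite system of congruences: the fixed vector $\bv$ lies in $M_\p = K_\p$ for all but finitely many $\p$, so $\by_\p \in K_\p$ for almost all $\p$, and at such places the congruence $\bz \equiv \by_\p$ reduces to $\bz \in K_\p$, which holds automatically for any global vector at almost all places. To solve the finitely many remaining congruences I would invoke the Chinese Remainder Theorem for vectors modulo a lattice, namely that the reduction map $V \to \bigoplus_\p V_\p/K_\p$ is surjective. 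Concretely, since $K$ is a finitely generated torsion-free module over the Dedekind domain $\ring$, it is projective and hence flat of rank $n$, so tensoring $0 \to \ring \to F \to F/\ring \to 0$ with $K$ gives $V/K \cong (F/\ring) \otimes_\ring K$; the decomposition of $F/\ring$ into its local components $\bigoplus_\p F_\p/\ring_\p$ then identifies this with $\bigoplus_\p V_\p/K_\p$, and lifting the element $(\by_\p \bmod K_\p)_\p$ produces the desired $\bz$. Alternatively, one can choose a basis of each free $\ring_\p$-lattice $K_\p$ and reduce the whole problem to the scalar statement that an element of $F$ with prescribed principal parts at finitely many primes and no poles elsewhere exists.

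The main obstacle is the vector step (ii); the lattice step (i) is a direct appeal to \cite{om}. Two points require care there. First, $K$ need not be a free $\ring$-module, so one cannot coordinatize $V$ globally and apply scalar CRT naively; this is circumvented by working one place at a time, using that each $K_\p$ is $\ring_\p$-free, or by the functorial computation of $V/K$ above. Second, one must guarantee the congruence at every place of $\Omega$, not merely at the places of $S$; but, as noted, outside a finite set the condition degenerates to $\bz \in K_\p$, which a global vector automatically satisfies almost everywhere, so only finitely many conditions are substantive and the Chinese Remainder Theorem applies.
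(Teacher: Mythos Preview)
Your proof is correct and follows essentially the same route as the paper: first build the global lattice $K$ from the prescribed localizations (the paper does this implicitly, you cite \cite[81:14]{om}), then find the global vector $\bz$ by a Chinese Remainder argument after observing that the congruence $\bz \equiv \bv \pmod{M_\p}$ is vacuous outside a finite set. The only difference is cosmetic: the paper packages the vector step as ``the strong approximation property of $V$'' (i.e., density of $V$ in the restricted product of the $V_\p$ relative to the $K_\p$), whereas you spell it out via the isomorphism $V/K \cong \bigoplus_\p V_\p/K_\p$.
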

\begin{proof}
Let $T$ be the set of places $\p \in \Omega \setminus S$ for which $\bv \not \in M_\p$.  Then $T$ is a finite set.  For each $\p \in T$, let $M(\p) = M_\p$ and $\bx_\p = \bv$.  Let $K$ be an $\ring$-lattice on $V$ such that
$$K_\p = \left \{
\begin{array}{ll}
M(\p) & \mbox{ if $\p \in S\cup T$};\\
M_\p & \mbox{ if $\p \in \Omega\setminus (S\cup T)$}.
\end{array}
\right .$$
By the strong approximation property of $V$, there exists $\bz \in V$ such that $\bz \equiv \bx_\p$ mod $M(\p)$ for all $\p
\in S\cup T$, and $\bz \in M_\p$ for all $\p \in \Omega \setminus (S\cup T)$.  Then $K + \bz$ is the desired $\ring$-coset.
\end{proof}

Let $O_\ad(V)$ be the adelization of the orthogonal group of $V$.  Let $\Sigma$ be an element in $O(V)_\ad$.  The $\p$-component of $\Sigma$ is denoted by $\Sigma_\p$.  Given an $\ring$-coset $M + \bv$ on $V$, $\Sigma_\p(M_\p + \bv) = \Sigma_\p(M_\p) = M_\p$ for almost all finite places $\p$.  By Lemma \ref{crt}, there exists an $\ring$-coset $K + \bz$ on $V$ such that $K_\p + \bz = \Sigma_\p(M_\p + \bv)$ for all $\p \in \Omega$. Therefore, we can define
$\Sigma(M + \bv)$ to be $K + \bz$, and so $O(V)_\ad$ acts transitively on $\text{gen}(M + \bv)$.  As a result,
$$\text{gen}(M + \bv) = O_\ad(V) \cdot (M + \bv).$$
Let $O_\ad(M + \bv)$ be the stabilizer of $M + \bv$ in $O_\ad(V)$.  Then the (isometry) classes in $\text{gen}(M + \bv)$ can be
identified with
$$O(V)\setminus O_\ad(V) / O_\ad(M + \bv).$$

The group $O_\ad(M + \bv)$ is clearly a subgroup of $O_\ad(M)$.  For each $\p \in \Omega$,  we have
\begin{eqnarray*}
O(M_\p + \bv) & = & \{ \sigma \in O(V_\p) : \sigma(M_\p) = M_\p \mbox{ and } \sigma(\bv) \equiv \bv \mbox{ mod } M_\p \}\\
& \subseteq & O(M_\p)\cap O(M_\p + \ring_\p \bv).
\end{eqnarray*}

\begin{lem}
For any $\p \in \Omega$, the group index $[O(M_\p) : O(M_\p + \bv)]$ is finite.
\end{lem}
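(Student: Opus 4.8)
The plan is to realize $O(M_\p + \bv)$ as the stabilizer of a single point under an action of $O(M_\p)$ on a finite set, and then invoke the orbit--stabilizer relation. Since every $\sigma \in O(M_\p)$ satisfies $\sigma(M_\p) = M_\p$, the group $O(M_\p)$ acts on the set of cosets $\{\, w + M_\p : w \in V_\p \,\}$ by $\sigma \cdot (w + M_\p) = \sigma(w) + M_\p$, and this action is well defined precisely because $\sigma$ preserves $M_\p$. Under this action, the stabilizer of the coset $\bv + M_\p$ consists of exactly those $\sigma \in O(M_\p)$ with $\sigma(\bv) - \bv \in M_\p$, which is by definition $O(M_\p + \bv)$. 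Consequently $[O(M_\p) : O(M_\p + \bv)]$ equals the cardinality of the orbit of $\bv + M_\p$, so it suffices to show that this orbit is finite.

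To bound the orbit, I would first trap all the vectors $\sigma(\bv)$ inside one fixed lattice. Because $M_\p$ is a full lattice on $V_\p$, we have $F_\p M_\p = V_\p$, so $\bv \in F_\p M_\p$ and there is a nonnegative integer $k$ with $\pi^k \bv \in M_\p$, where $\pi$ is a uniformizer of $\ring_\p$ (concretely, writing $\bv$ in a basis of $M_\p$ and taking $k$ to dominate the negatives of the orders of the coordinates). For every $\sigma \in O(M_\p)$ we then have $\pi^k \sigma(\bv) = \sigma(\pi^k \bv) \in \sigma(M_\p) = M_\p$, hence $\sigma(\bv) \in \pi^{-k} M_\p$. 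Therefore each coset $\sigma(\bv) + M_\p$ is an element of the quotient $\pi^{-k} M_\p / M_\p$, and the entire orbit is contained in this quotient.

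It remains to observe that $\pi^{-k}M_\p / M_\p$ is finite. This is immediate since $\ring_\p$ is a discrete valuation ring with finite residue field $\ring_\p/\p$, of order $q$ say: choosing a basis identifies $\pi^{-k}M_\p/M_\p$ with $(\pi^{-k}\ring_\p/\ring_\p)^n \cong (\ring_\p/\p^k)^n$, a set of size $q^{nk}$, where $n = \dim_F V$. Hence the orbit of $\bv + M_\p$ has at most $q^{nk}$ elements, and by orbit--stabilizer the index is finite, with the explicit bound $[O(M_\p) : O(M_\p + \bv)] \leq q^{nk}$.

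I do not expect a genuine obstacle here; the argument is essentially routine once the coset action is set up. The only points needing a little care are the well-definedness of the action (which uses $\sigma(M_\p)=M_\p$) and the existence of the exponent $k$ (which uses that $M_\p$ spans $V_\p$ and that $\bv \in V_\p$). I would also record in passing the soft alternative: $O(M_\p)$ is compact in its natural $\p$-adic topology and $M_\p$ is open in $V_\p$, so $O(M_\p + \bv)$ is the preimage of the open set $\bv + M_\p$ under the continuous map $\sigma \mapsto \sigma(\bv)$, hence an open, and therefore finite-index, subgroup.
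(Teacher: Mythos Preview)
Your argument is correct and actually cleaner than the paper's. The paper proceeds in two steps: it first introduces the intermediate subgroup $O(M_\p)\cap O(M_\p + \ring_\p \bv)$, shows $[O(M_\p)\cap O(M_\p + \ring_\p \bv) : O(M_\p + \bv)]$ is finite via the natural homomorphism to $\mathrm{Aut}_{\ring_\p}\bigl((M_\p + \ring_\p \bv)/M_\p\bigr)$ (whose kernel is $O(M_\p + \bv)$ and whose target is finite), and then invokes a result from Kneser's book to conclude that $[O(M_\p) : O(M_\p)\cap O(M_\p + \ring_\p \bv)]$ is finite. You instead go directly: the action of $O(M_\p)$ on $V_\p/M_\p$ has $O(M_\p + \bv)$ as the stabilizer of $\bv + M_\p$, and the orbit sits inside the finite set $\pi^{-k}M_\p/M_\p$. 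This avoids the external citation entirely, is self-contained, and even yields the explicit bound $q^{nk}$. The paper's route has the mild virtue of isolating the lattice $M_\p + \ring_\p\bv$, which reappears elsewhere in the section, but for the lemma itself your orbit--stabilizer argument (and likewise your topological remark using compactness of $O(M_\p)$ and openness of $\bv + M_\p$) is the more economical proof.
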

\begin{proof}
There is the natural map
$$O(M_\p)\cap O(M_\p + \ring_\p \bv) \longrightarrow \text{Aut}_{\ring_\p}((M_\p + \ring_\p \bv) / M_\p)$$
whose kernel is precisely $O(M_\p + \bv)$.  Since $(M_\p + \ring_\p \bv)/ M_\p$ is a finite group, the index $[O(M_\p)\cap O(M_\p + \ring_\p \bv) : O(M_\p + \bv)]$ is finite.  But it is known \cite[30.5]{kn} that the index $[O(M_\p) : O(M_\p)\cap O(M_\p + \ring_\p \bv)]$ is always finite.  This proves the lemma.
\end{proof}

Since $M_\p = M_\p + \bv$ for almost all $\p \in \Omega$, the index $[O_\ad(M) : O_\ad(M + \bv)]$ is finite.  The set
$O(V)\setminus O_\ad(V) / O_\ad(M)$ is finite (which is the class number of $M$), hence the set $O(V)\setminus O_\ad(V) / O_\ad(M + \bv)$ is also finite.  Let $h(M + \bv)$ be the number of elements in this set, which is also the number of classes of in $\text{gen}(M +\bv)$.  We call it the {\em class number} of $M + \bv$.

\begin{cor} \label{classnumber}
The class number $h(M + \bv)$ is finite, and $h(M + \bv) \geq h(M)$.
\end{cor}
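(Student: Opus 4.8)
The plan is to leverage the inclusion $O_\ad(M + \bv) \subseteq O_\ad(M)$ together with the finiteness of the index $[O_\ad(M) : O_\ad(M + \bv)]$, both of which were established in the discussion preceding the statement. The finiteness of $h(M + \bv)$ is then already in hand: since $h(M) = |O(V)\setminus O_\ad(V)/O_\ad(M)|$ is finite and the right-stabilizer has merely shrunk to a finite-index subgroup, the finer double coset set $O(V)\setminus O_\ad(V)/O_\ad(M + \bv)$ must be finite as well. I would record this and then devote the remaining work to the inequality.

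For $h(M + \bv) \geq h(M)$, I would introduce the natural projection
$$\pi : O(V)\setminus O_\ad(V)/O_\ad(M + \bv) \longrightarrow O(V)\setminus O_\ad(V)/O_\ad(M)$$
sending the $O_\ad(M + \bv)$-double coset of $\Sigma \in O_\ad(V)$ to its $O_\ad(M)$-double coset. This map is well-defined precisely because $O_\ad(M + \bv) \subseteq O_\ad(M)$, so enlarging the right-multiplying subgroup can only merge double cosets, never split them. It is surjective, since every $O_\ad(M)$-double coset is the image of the class of any of its representatives. A surjection between finite sets forces the domain to have at least as many elements as the codomain, which is exactly $h(M + \bv) \geq h(M)$.

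To make the whole argument self-contained, I would quantify the fibers at the same time. Writing $O_\ad(M) = \bigcup_j \tau_j\, O_\ad(M + \bv)$ as a finite union of cosets, any $O_\ad(M)$-double coset $O(V)\,\Sigma\, O_\ad(M)$ decomposes into the classes $O(V)\,\Sigma\tau_j\, O_\ad(M + \bv)$, so each fiber of $\pi$ contains at most $[O_\ad(M) : O_\ad(M + \bv)]$ elements. This simultaneously reproves the finiteness of $h(M + \bv)$ and yields the two-sided bound $h(M) \leq h(M + \bv) \leq [O_\ad(M) : O_\ad(M + \bv)]\, h(M)$.

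I do not anticipate a genuine obstacle here: the content is the formal fact that refining the right-hand subgroup in a double coset space induces a surjection onto the coarser space. The only points requiring care are the direction of the inequality---the finer (larger) double coset set is the one that dominates, so the inequality runs as stated---and phrasing the well-definedness of $\pi$ in terms of the correct inclusion $O_\ad(M + \bv) \subseteq O_\ad(M)$ rather than its reverse.
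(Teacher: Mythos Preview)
Your argument is correct and is exactly the intended one. The paper states the corollary without a separate proof, as it follows immediately from the preceding sentences establishing $O_\ad(M+\bv)\subseteq O_\ad(M)$ with finite index and the finiteness of $h(M)$; your surjection $\pi$ between the double coset spaces is precisely the formal content behind the word ``hence'' in that discussion, and the extra two-sided bound you record is a pleasant bonus.
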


If we replace the orthogonal groups by the special orthogonal groups in the above discussion, then we have the definitions for the proper genus $\text{gen}^+(M + \bv)$, which can be identified with $O^+(V)\setminus O^+_\ad(V)/O^+_\ad(M + \bv)$, and the proper class number $h^+(M + \bv)$ which is also finite.  Unlike the case of lattices, it is not true in general that $\text{gen}(M + \bv)$ coincides with $\text{gen}^+(M + \bv)$.  The following example illustrates this phenomenon.  It also shows that in general $h(M + \bv)$ and $h(M)$ are not equal.

\begin{exam}
Let $W$ be the hyperbolic plane over $\mathbb Q$, and let $H$ be the $\mathbb Z$-lattice on $W$ spanned by two linear independent isotropic vectors $\be$ and $\mathbf f$ such that $B(\be,\mathbf f) = 1$.  Consider the $\z$-coset $H + \bv$, where $\bv = \frac{1}{p}\be$ for some odd prime $p$.  Suppose that
$\sigma_p$ is an improper isometry of $H_p + \bv$.  Then $\sigma_p$ must send $\be$ to $\epsilon \mathbf f$ and $\mathbf f$ to $\epsilon^{-1}\be$ for some unit $\epsilon$ in $\mathbb Z_p$.
Then
$$\bv = \frac{1}{p}\be \equiv \sigma_p(\bv) \equiv \frac{\epsilon}{p} \mathbf f \mbox{ mod } H_p.$$
This implies that $\frac{1}{p}\be - \frac{\epsilon}{p}\mathbf f$ is in $H_p$, which is absurd.  Therefore, $H_p + \bv$ does not have
any improper isometry and hence $\text{gen}(H + \bv)$ is not the same as $\text{gen}^+(H + \bv)$.

Now, suppose in addition that $p > 3$.  Let $q$ be an integer such that $q \not \equiv \pm 1$ mod $p$.  Let $\bu$ be the vector $\frac{q}{p}\mathbf e$.  Then the coset $H + \bu$ is in $\text{gen}^+(H + \bv)$.  To see this, observe that $H_\ell + \bu = H_\ell + \bv$ for all primes $\ell \neq p$.  At $p$, the isometry that sends $\be$ to $q^{-1}\be$ and $\mathbf f$ to $q\mathbf f$, whose determinant is 1,  would
send $H_p + \bu$ to $H_p + \bv$.  Suppose that there exists $\sigma \in O(W)$ which sends $H + \bu$ to $H + \bv$.    Then $\sigma$ necessarily sends $H$ to $H$ itself; hence the matrix for $\sigma$ relative to the basis $\{\be, \mathbf f\}$ is one of the
following:
$$\begin{bmatrix} 1 & 0\\ 0 & 1 \end{bmatrix},\quad
\begin{bmatrix} -1 & 0\\ 0 & -1 \end{bmatrix},\quad
\begin{bmatrix} 0 & 1\\ 1 & 0 \end{bmatrix},\quad
\begin{bmatrix} 0 & -1\\ -1 & 0 \end{bmatrix}.$$
But a simple calculation shows that none of the above sends $H + \bu$ to $H + \bv$.  Hence $H + \bu$ is not in the same class of $H + \bv$.  As a result, both $h^+(H + \bv)$ and $h(H + \bv)$ are greater than 1, while $h(H)$ and $h^+(H)$ are 1.
\end{exam}

Of course, there are $\ring$-cosets, which are not $\ring$-lattices themselves, whose class numbers are 1.  Here is an example:

\begin{exam}
Let $M$ be the $\mathbb Z$-lattice whose Gram matrix is $\langle 4, 4, 4 \rangle$ relative to a basis $\{\be, \mathbf f, \mathbf g\}$, and let $\bv$ be $\frac{\be + \mathbf f + \mathbf g}{2}$.  The class number of $M$ is 1.  The lattice $M + \mathbb Z \bv$ is isometric to
$$\begin{pmatrix}
3 & 1 & -1\\
1 & 3 & 1\\
-1 & 1 & 3
\end{pmatrix}$$
whose class number is also 1.  Since $h(M) = 1$, any $\z$-coset in $\text{gen}(M + \bv)$ has an isometric copy of the form $M + \bx$ for some $\bx \in \mathbb Q M$.   If $M + \bx \in \text{gen}(M + \bv)$, then the lattice $M + \mathbb Z \bx$ is in $\text{gen}(M + \mathbb Z\bv)$ which has only one class. Therefore, there exists an isometry $\sigma \in O(\mathbb Q M)$ such that $\sigma(\bx) \in M + \mathbb Z\bv$. Thus $\sigma(\bx) = \bz + a\bv$, where $\bz \in M$ and $a \in \mathbb Z$.  But $Q(\bx)$ must be odd; therefore $a$ must be odd and hence $\sigma(\bx) \equiv \bv$ mod $M$.  This shows that $\sigma(M + \bx) = M + \bv$ and so $h(M + \bv) = 1$.
\end{exam}

\begin{prop}
Let $\bx$ be a vector in $V$.  Suppose that for each $\p \in \Omega$, there exists $\sigma_\p \in O(V_\p)$ such that $\bx \in \sigma_\p(M_\p + \bv)$.  Then there exists $K + \bz \in \textnormal{gen}(M + \bv)$ such that $\bx \in K + \bz$.
\end{prop}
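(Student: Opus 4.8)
The plan is to reduce everything to the patching lemma (Lemma \ref{crt}) by packaging the local isometries $\sigma_\p$ into local coset data and then gluing. The crucial preliminary observation is that the local hypothesis is already automatic at almost all places: since $\bv$ and $\bx$ are fixed vectors of $V$ and $M$ is an $\ring$-lattice, we have $\bv \in M_\p$ and $\bx \in M_\p$ for all but finitely many $\p$, and for such $\p$ one has $\bx \in M_\p = M_\p + \bv$. Hence the set $S = \{\p \in \Omega : \bx \notin M_\p + \bv\}$ is finite.

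First I would set up the local data. For $\p \in S$ let $M(\p) + \bx_\p := \sigma_\p(M_\p + \bv) = \sigma_\p(M_\p) + \sigma_\p(\bv)$, the local coset obtained by transporting $M_\p + \bv$ by the given isometry $\sigma_\p$; by hypothesis $\bx \in M(\p) + \bx_\p$. At the remaining places I leave $M_\p + \bv$ untouched. Applying Lemma \ref{crt} with this finite set $S$ produces an $\ring$-coset $K + \bz$ on $V$ with $K_\p + \bz = \sigma_\p(M_\p + \bv)$ for $\p \in S$ and $K_\p + \bz = M_\p + \bv$ for $\p \in \Omega \setminus S$.

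Next I would verify the two required properties. That $K + \bz$ lies in $\gen(M + \bv)$ is immediate: at each $\p \in S$ the isometry $\sigma_\p \in O(V_\p)$ carries $M_\p + \bv$ onto $K_\p + \bz$, and at the other places the two cosets are literally equal. To see $\bx \in K + \bz$, I would check that $\bx - \bz \in K_\p$ for every $\p$. For $\p \in S$ both $\bx$ and $\bz$ lie in the coset $K_\p + \sigma_\p(\bv)$ (the former by hypothesis, the latter because $K_\p + \bz = K_\p + \sigma_\p(\bv)$ forces $\bz \equiv \sigma_\p(\bv) \bmod K_\p$), so their difference lies in $K_\p$; for $\p \notin S$ we have $\bx \in M_\p + \bv = K_\p + \bz$, again giving $\bx - \bz \in K_\p$.

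Finally I would invoke the local-to-global description of a lattice as the intersection of its localizations, $K = V \cap \bigcap_{\p} K_\p$: since $\bx - \bz$ is a genuine vector of $V$ lying in every $K_\p$, it lies in $K$, whence $\bx \in K + \bz$. The only content beyond bookkeeping is confirming finiteness of $S$ so that Lemma \ref{crt} applies, together with the passage from ``$\bx - \bz \in K_\p$ for all $\p$'' to global membership; both are standard, so I expect no serious obstacle here.
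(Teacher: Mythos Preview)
Your proposal is correct and is exactly the argument the paper has in mind; the paper compresses it to a single sentence (``This follows from Lemma~\ref{crt} since $\bx \in M_\p = M_\p + \bv$ for almost all $\p$''), and you have simply unpacked the details of applying Lemma~\ref{crt} and verifying $\bx \in K + \bz$ via the local description of $K$.
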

\begin{proof}
This follows from Lemma \ref{crt} since $\bx \in M_\p = M_\p + \bv$ for almost all $\p$.
\end{proof}

Let $a \in F$.  We say that $M + \bv$ represents $a$ if there exists a nonzero vector $\bz \in M + \bv$ such that $Q(\bz) = a$, and that $\gen(M + \bv)$ represents $a$ if $V_\ell$ represents $a$ for all places $\ell \not \in \Omega$ and $M_\p + \bv$ represents $a$ for all places $\p \in \Omega$.  The following corollary shows that the class number of a coset can be viewed as a measure of the obstruction of the local-to-global implication in (\ref{hasse}).

\begin{cor}
Let $a \in F^\times$. Suppose that $\textnormal{gen}(M_\p + \bv)$ represents $a$. Then there
exists $K + \bz \in \textnormal{gen}(M + \bv)$ which represents $a$.
\end{cor}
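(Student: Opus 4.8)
The plan is to reduce the corollary to the immediately preceding Proposition. That result guarantees that if a single global vector $\bx \in V$ with $Q(\bx) = a$ can be arranged to lie in $\sigma_\p(M_\p + \bv)$ for some local isometry $\sigma_\p \in O(V_\p)$ at every $\p \in \Omega$, then some coset $K + \bz \in \gen(M + \bv)$ contains $\bx$; since $Q(\bx) = a$ and $a \in F^\times$ forces $\bx \neq \0$, that coset represents $a$. Thus the whole problem reduces to (i) producing one global vector of norm $a$, and (ii) checking the local hypothesis of the Proposition for it. The key observation is that no approximation is needed: the Proposition asks only for the mere existence of the $\sigma_\p$, so any global vector of norm $a$ will do, with the local isometries absorbing all local discrepancy.

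First I would produce the global vector. Unwinding the hypothesis that $\gen(M + \bv)$ represents $a$ gives two facts: $V_\ell$ represents $a$ for every place $\ell \notin \Omega$, and $M_\p + \bv$ represents $a$ for every $\p \in \Omega$. The second fact supplies, for each $\p \in \Omega$, a vector $\bx_\p \in M_\p + \bv$ with $Q(\bx_\p) = a$, so in particular $V_\p$ represents $a$ at those places as well. Hence $a$ is represented by $V$ over every completion $F_\p$, and the Hasse--Minkowski theorem yields a global vector $\bx \in V$ with $Q(\bx) = a$.

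Next I would verify the local hypothesis for this $\bx$. Fix $\p \in \Omega$. The vectors $\bx$ and $\bx_\p$ lie in the nondegenerate space $V_\p$ and share the same nonzero norm $a$, so by Witt's extension theorem there is an isometry $\sigma_\p \in O(V_\p)$ with $\sigma_\p(\bx_\p) = \bx$. Since $\bx_\p \in M_\p + \bv$, this gives $\bx = \sigma_\p(\bx_\p) \in \sigma_\p(M_\p + \bv)$, which is exactly what the Proposition requires. Applying the Proposition produces $K + \bz \in \gen(M + \bv)$ with $\bx \in K + \bz$, and therefore $K + \bz$ represents $a$.

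The step demanding the most care is the verification of local solubility at every place before invoking Hasse--Minkowski: the two halves of the definition of ``$\gen(M + \bv)$ represents $a$'' are arranged precisely to cover the places outside $\Omega$ and the places inside $\Omega$, so one must confirm that together they account for all places of $F$ and that nothing is overlooked. Beyond that, the argument is routine. The appeal to Witt's theorem is unconditional here because $a \in F^\times$ keeps the relevant vectors anisotropic; the degenerate case $a = 0$ is excluded by hypothesis and would require separate treatment.
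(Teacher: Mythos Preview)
Your proof is correct and follows essentially the same route as the paper: produce a global vector of norm $a$ via the Hasse Principle, use Witt's extension theorem at each $\p \in \Omega$ to move the local representatives $\bx_\p$ to this global vector, and then invoke the preceding Proposition. Your write-up is slightly more explicit in checking that the hypothesis covers all places before applying Hasse--Minkowski and in noting that $a \in F^\times$ forces $\bx \neq \0$, but the argument is the same.
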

\begin{proof}
The hypothesis says that for each $\p \in \Omega$ there is a vector $\bz_\p \in M_\p + \bv$ such that $Q(\bz_\p) = a$.   By virtue of the Hasse Principle, there exists a vector $\bz \in V$ such that  $Q(\bz) = a$.  At each $\p\in \Omega$, it follows from Witt's extension theorem that there is an isometry $\sigma_\p \in O(V_\p)$ such that $\sigma_\p(\bz_\p) = \bz$.  Then for each $\p \in \Omega$,
$$\bz = \sigma_\p(\bz_\p) \in \sigma_\p(M_\p + \bv).$$
By the previous proposition, $\bz$ is contained in some coset $K + \bz \in \text{gen}(M + \bv)$.  Equivalently, $a$ is represented by $K + \bz$.
\end{proof}

When $F$ is a number field, the obstruction of the local-to-global principle for representations of cosets may be overcome by applying the results on representations of quadratic lattices with approximation properties.

\begin{thm} \label{rep}
Let $M+\bv$ be an $\ring$-coset on a positive definite quadratic space over a totally real number field $F$. Suppose that $a \in F^\times$ is represented by $\textnormal{gen}(M+\bv)$.
\begin{enumerate}
\item[(1)]  If $\dim(M) \geq 5$, then there exists a constant $C = C(M , \bv)$ such that $a$ is represented by $M + \bv$ provided $\mathbb N_{F/\q}(a) > C$.

\item[(2)] Suppose that $\dim(M) = 4$ and $a$ is primitively represented by $M_\p + \ring_\p\bv$ whenever $M_\p$ is anisotropic.  Then there exists a constant $C^* = C^*(M, \bv)$ such that $a$ is represented by $M + \bv$ provided $\mathbb N_{F/\q}(a) > C^*$.
\end{enumerate}
\end{thm}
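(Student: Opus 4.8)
The plan is to convert the representation problem for the coset $M + \bv$ into a representation problem for an honest $\ring$-lattice subject to a congruence condition, and then to feed this into the known effective representation theorems for quadratic lattices with approximation. First I would introduce the lattice $L = M + \ring\bv$. Since $\bv$ lies in $V = FM$, there is a nonzero $d \in \ring$ with $d\bv \in M$, so $M \subseteq L$ with $L/M$ a finite cyclic $\ring$-module generated by the image of $\bv$. A vector $\bz \in M + \bv$ with $Q(\bz) = a$ is precisely a vector $\bz \in L$ with $Q(\bz) = a$ and $\bz \equiv \bv \pmod{M}$; thus $M + \bv$ represents $a$ if and only if $a$ is represented by $L$ by a vector lying in the class $\bv + M$ of $L/M$.

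Next I would translate the local hypotheses. At every $\p \in \Omega$ the assumption that $\gen(M + \bv)$ represents $a$ supplies $\by_\p \in M_\p + \bv$ with $Q(\by_\p) = a$; for all but finitely many $\p$ we have $\bv \in M_\p$, so $L_\p = M_\p$ and the congruence $\bz \equiv \bv \pmod{M_\p}$ is vacuous, leaving only that $M_\p$ represent $a$, which holds. At the finitely many remaining primes — those dividing $d$, the discriminant, or where $M_\p$ is anisotropic — the vectors $\by_\p$ serve as local approximation targets, and a global $\bz \in L$ approximating each $\by_\p$ closely enough will satisfy $\bz \equiv \bv \pmod{M_\p}$ there, while lying automatically in $M_\p$ at the good primes. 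Hence such a $\bz$ lies in $M + \bv$. Since the space is positive definite over the totally real field $F$, representability by the genus also forces $a$ to be totally positive, so the archimedean and non-$\Omega$ conditions are met.

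With this setup the problem becomes one of representing $a$ by $L$ with prescribed behaviour at a finite set of primes. For part (1), with $\dim L \geq 5$, I would invoke the effective Hsia–Kitaoka–Kneser–type representation theorem with approximation: once $a$ is represented everywhere locally and $\mathbb N_{F/\q}(a)$ exceeds a bound depending only on $L$ (hence only on $M$ and $\bv$) together with the finitely many local conditions, a global $\bz \in L$ realizing $Q(\bz) = a$ and approximating each $\by_\p$ exists. This produces $\bz \in M + \bv$ and gives the constant $C = C(M,\bv)$.

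For part (2), with $\dim L = 4$, the same reduction applies but the representation theory is genuinely more delicate: definite quaternary lattices admit spinor-exceptional integers, represented by the genus (and even the spinor genus) yet not by a given class, and at anisotropic primes the local representation densities can degenerate. The primitivity hypothesis — that $a$ be primitively represented by $M_\p + \ring_\p\bv$ whenever $M_\p$ is anisotropic — is exactly what lets me take the local approximations $\by_\p$ to be primitive there, keeping the relevant local densities bounded away from zero and suppressing the spinor obstruction once $\mathbb N_{F/\q}(a)$ is large. Invoking the quaternary effective representation theorem with approximation under this primitivity condition then yields the global $\bz$ and the constant $C^*$. The main obstacle throughout is precisely this rank-$4$ case: one must check that the primitivity condition controls the spinor-genus discrepancy and the density degeneration \emph{uniformly}, so that the resulting bound $C^*$ depends only on $M$ and $\bv$ and not on the particular $a$.
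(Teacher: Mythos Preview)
Your proposal is correct and follows essentially the same route as the paper: pass to the lattice $L = M + \ring\bv$, reinterpret membership in $M + \bv$ as a congruence condition modulo $M$ at the finitely many bad primes, and then apply the representation-with-approximation theorem of J\"ochner--Kitaoka (for $\dim \geq 5$) and its quaternary analogue under the primitivity hypothesis (the paper cites \cite[Appendix]{ch}) to produce the global vector. The paper's write-up is terser---it simply fixes $S$, a depth $s$ with $\p^s\bv \in M_\p$, and invokes the cited theorems---whereas you spell out the spinor/density rationale behind the primitivity condition, but the argument is the same.
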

\begin{proof}
(1) Let $S$ be the subset of $\Omega$ containing all $\p$ for which $M_\p + \bv \neq M_\p$ or $M_\p$ is not unimodular.  This $S$ is a finite set.  For each $\p \in S$, let $\bx_\p \in M_\p$ such that $Q(\bx_\p + \bv) = a$.  Choose an integer $s$ large enough so that $\p^s\bv \in M_\p$ for all $\p \in S$.  Let $C$ be the constant obtained from applying the number field version of the main theorem in \cite{jk}  (see \cite[Remark (ii)]{jk}) to $M + \ring\bv$, $S$, and $s$.  If $\mathbb N_{F/\q}(a) > C$, then there exists $\bw \in M + \ring \bv$ such that $Q(\bw) = a$ and $\bw \equiv \bx_\p + \bv \equiv \bv$ mod $\p^s(M_\p + \ring_\p \bv)$ for every $\p \in S$. Since $\p^s(M_\p + \ring_\p \bv) \subseteq M_\p$, it follows that $\bw$ is in $M + \bv$, which means that $M + \bv$ represents $a$.

Part (2) can be proved in the same manner, except that we need to replace the main theorem in \cite{jk} by \cite[Appendix]{ch}.
\end{proof}

When $V$ is indefinite, we need to take into account of the orthogonal complement of a vector representing $a$.  Since $a$ is represented by $\gen(M + \bv)$, $a$ must be represented by $\gen(M + \ring \bv)$, and it follows from the Hasse Principle that there exists $\bz \in V$ such that $Q(\bz) = a$.  Let $W$ be the orthogonal complement of $\bz$ in $V$.  The following theorem is an immediate consequence of \cite[Corollary 2.6]{bc}.

\begin{thm} \label{rep2}
Let $M + \bv$ be an $\ring$-coset on an indefinite quadratic space over a number field $F$.  Suppose that $a \in F^\times$ is represented by $\textnormal{gen}(M + \bv)$.
\begin{enumerate}
\item[(1)]  If $\dim(M) \geq 4$ or $W$ is isotropic, then $a$ is represented by $M + \bv$.

\item[(2)] Suppose that $\dim(M) = 3$, $W$ is anisotropic, and $M + \ring\bv$ represents $a$.  Then $M + \bv$ represents $a$ if either $a$ is a spinor exception of $\textnormal{gen}(M + \ring \bv)$ or there exists $\p \not \in \Omega$ for which $W_\p$ is anisotropic and additionally $V_\p$ is isotropic if $\p$ is a real place.
\end{enumerate}
\end{thm}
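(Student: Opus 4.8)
The plan is to translate the representability of $a$ by the coset $M + \bv$ into a representation problem for the lattice $L := M + \ring\bv$ subject to local congruence conditions, and then to invoke \cite[Corollary 2.6]{bc} directly. Representing $a$ by $M + \bv$ means producing a vector $\by \in M + \bv$ with $Q(\by) = a$; writing $\by = \bx + \bv$ with $\bx \in M$, this is the same as representing $a$ by a vector of $L$ that lies in the prescribed coset $\bv + M$ of $M$ inside $L$. Since $L/M$ is finite, this coset membership is nothing but a congruence condition modulo $M$ at the finite set $S$ of places $\p$ where $\bv \notin M_\p$.

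First I would assemble the local input guaranteed by the hypothesis that $a$ is represented by $\gen(M + \bv)$: for each $\p \in \Omega$ there is $\bx_\p \in M_\p$ with $Q(\bx_\p + \bv) = a$, and $V_\ell$ represents $a$ at every place $\ell \notin \Omega$. The global vector $\bz$ with $Q(\bz) = a$ and its orthogonal complement $W$ have already been produced above via the Hasse Principle. The vectors $\bx_\p + \bv$ furnish precisely the local congruence targets needed to apply \cite[Corollary 2.6]{bc} to the data $L$, $a$, and $S$.

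Next I would match the hypotheses of the two parts to those of \cite[Corollary 2.6]{bc}. In part (1), $\dim M \geq 4$ gives $\dim W = \dim M - 1 \geq 3$, while the alternative that $W$ is isotropic covers the remaining low-dimensional case; in either situation the corollary produces a global vector representing $a$ and satisfying the congruences at $S$, so that vector lies in $M + \bv$. In part (2) we have $\dim M = 3$ and $W$ an anisotropic binary space, so the representation is controlled by the spinor genus; the dichotomy that $a$ be a spinor exception of $\gen(M + \ring\bv)$, or that some $\p \notin \Omega$ have $W_\p$ anisotropic with $V_\p$ isotropic at real $\p$, is exactly the hypothesis \cite[Corollary 2.6]{bc} requires to eliminate the spinor obstruction and force representability by $M + \bv$.

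The step I expect to be the crux is the alignment in part (2): one must check that the congruence-refined representation problem for $L = M + \ring\bv$ sits squarely inside the hypotheses of \cite[Corollary 2.6]{bc}, in particular that the spinor-exception and anisotropy conditions stated for the coset correspond correctly to the spinor-genus analysis of the ambient lattice $L$, and that the congruence constraints at $S$ do not disturb that analysis. Once this correspondence is confirmed, the theorem follows immediately from the cited corollary, as claimed.
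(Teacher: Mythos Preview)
Your proposal is correct and follows essentially the same approach as the paper: define the finite set of places where $\bv \notin M_\p$ (the paper calls it $T$, you call it $S$), invoke \cite[Corollary 2.6]{bc} to obtain a representation of $a$ by $M + \ring\bv$ with approximation at those places, and observe that the resulting vector $\bw$ satisfies $\bw \equiv \bv \bmod M_\p$ at every $\p \in T$, hence lies in $M + \bv$. The paper's proof is terser and does not dwell on the alignment in part~(2) that you flag as the crux; it simply asserts that the stated hypotheses are exactly those needed for \cite[Corollary 2.6]{bc} to apply, so your caution there is reasonable but ultimately unnecessary.
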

\begin{proof}
Let $T$ be the set of places $\p$ for which $\bv \not \in M_\p$.  By \cite[Corollary 2.6]{bc}, the hypothesis in either (1) or (2) implies that $M + \ring \bv$ represents $a$ with approximation at $T$.  Therefore, there exists $\bw \in M + \ring \bv$ such that $Q(\bw) = a$ and $\bw \equiv \bv$ mod $M_\p$ for all $\p \in T$.  Consequently, $M + \bv$ represents $a$.
\end{proof}

We conclude this paper by offering a few comments on the additional hypothesis placed in Theorem \ref{rep2}(2).   First, there is an effective procedure \cite{sp1} to decide whether $a$ is a spinor exception of $\gen(M + \ring \bv)$.  It depends on the knowledge of the local relative spinor norm groups $\theta(M_\p + \ring_\p\bv, a)$.  These groups have been computed in \cite{sp1} when $\p$ is nondyadic or 2-adic, and in \cite{x} when $\p$ is general dyadic.  When $a$ is a spinor exception of $\gen(M + \ring \bv)$,  it is also possible to determine if $M + \ring\bv$ itself represents $a$; see \cite[Theorem 3.6]{cx} for example.

\bibliographystyle{amsplain}

\begin{thebibliography}{10}
\bibitem {bc} N. Beli and W.K. Chan, {\em Strong approximation of quadrics and representations of quadratic forms}, J. Number Theory {\bf 128} (2008), 2091-2096.

\bibitem {bh} M. Bhargava and J. Hanke, {\em Universal quadratic forms and the 290-theorem}, preprint.

\bibitem {bo} J. Bochnak and B.-K. Oh, {\em Almost universal quadratic forms: an effective solution of a problem of Ramanujan}, Duke Math. J. {\bf 147} (2009), 131�156.

\bibitem {bk} W. Bosma and B. Kane, {\em The triangular theorem of eight and representation by quadratic polynomials}, to appear.

\bibitem {b} D.A. Burgess, {\em On character sums and $L$-series, II}, Proc. London Math. Soc. {\bf 13} (1963), 524-536.

\bibitem {ca} J.W.S. Cassels, {\em Rational quadratic forms}, Academic Press, London, 1978.

\bibitem {ch} W.K. Chan and A. Haensch, {\em Almost universal ternary sums of squares and triangular numbers}, to appear.

\bibitem {ch2} W.K. Chan and J.S. Hsia, {\em On almost strong approximation for algebraic groups}, J. Algebra {\bf 254} (2002), 441-461.

\bibitem {co} W.K. Chan and B.-K. Oh, {\em Almost universal ternary sums of triangular numbers}, Proc. AMS {\bf 137} (2009), 3553-3562.

\bibitem {cx} W.K. Chan and F. Xu, {\em On representations of spinor genera}, Compositio Math. {\bf 140} (2004), 287-300.

\bibitem {e} A. Earnest, {\em The representation of binary quadratic forms by positive definite quaternary quadratic forms}, Trans. AMS {\bf 345} (1994), 853-863.

\bibitem {h} J.S. Hsia, {\em Arithmetic of indefinite quadratic forms}, Contempoary Math. AMS {\bf 249} (1999), 1-15.


\bibitem {jks} W.C. Jagy, I. Kaplansky and A. Schiemann, {\em There are 913 regular ternary forms}, Mathematika, {\bf 44} (1997), 332-341.

\bibitem {jk} M. J\"{o}chner and Y. Kitaoka, {\em Representations of positive definite quadratic forms with congruence and primitive conditions}, J. Number Theory, {\bf 48} (1994), 88-101.

\bibitem {j} J.P. Jones, {\em Undecidable diophantine equations}, Bull. AMS (N.S.) {\bf 3} (1980), 859-862.

\bibitem {ks} B. Kane and Z.W. Sun, {\em On almost universal mixed sums of squares and triangular numbers}, Trans. AMS {\bf 362} (2010), 6425-6455.

\bibitem {kko} B.M. Kim, M.-H. Kim, and B.-K. Oh, {\em 2-universal positive definite integral quinary quadratic forms}, Contemporary Math. {\bf 249} (1999), 51-62.

\bibitem {ki} Y. Kitaoka, {\em Arithmetic of quadratic forms}, Cambridge University Press, 1993.

\bibitem {kn} M. Kneser, {\em Quadratische Formen}, Springer Verlag, 2000.

\bibitem {li} J. Liouville, {\em Nouveaux th�eor`emes concernant les nombres triangulaires}, Journal de Math�ematiques pures et appliqu\'{e}es {\bf 8} (1863), 73�84.

\bibitem {ma} Yu. V. Matiyasevich, {\em The diophantiness of enumerable sets}, Dokl. Akad. Nauk SSSR, {\bf 191} (1970), 272-282.

\bibitem {o} B.-K. Oh, {\em Regular positive ternary quadratic forms}, Acta Arith. {\bf 147} (2011), 233-243.

\bibitem {om} O.T. O'Meara, {\em Introduction to quadratic forms}, Springer Verlag, New York, 1963.


\bibitem {sp1} R. Schulze-Pillot, {\em Dartellung durch Spinorgeschlechter Formen}, J. Number Theory {\bf 12} (1980), 529-540.

\bibitem {sp} R. Schulze-Pillot, {\em Representation by integral quadratic forms--a survey}, Comtempoary Math. AMS {\bf 344} (2004), 303-321.

\bibitem {t} W. Tartakovski, {\em Die Gesamtheit der Zahlen, die durch eine positive quadratische Form $F(x_1, \ldots, x_s)$ $(s \geq 4)$ darstellbar sind}, IZv. Akad. Nauk SSSR. {\bf 7} (1929), 111-122, 165-195.

\bibitem {w1} G.L. Watson, {\em Some problems in the theory of numbers}, Ph.D. Thesis, University of London, 1953.

\bibitem {w2} G.L. Watons, {\em The representation of integers by positive ternary quadratic forms}, Mathematika {\bf 1} (1954), 104-110.

\bibitem {x} F. Xu, {\em Representations of indefinite ternary quadratic forms over number fields}, Math. Z. {\bf 234} (2000), 115-144.

\end{thebibliography}

\end{document}